\let\ORIlabel\label
\let\ORIrefstepcounter\refstepcounter
   \let\label\ORIlabel 
   \let\refstepcounter\ORIrefstepcounter}
\setlist[description]{style=unboxed,leftmargin=.5em}
\setlist[itemize]{style=sameline,leftmargin=2em}
\ifpdf  \DeclareGraphicsExtensions{.eps,.pdf,.png,.jpg}
\crefname{hypothesis}{Hypothesis}{Hypotheses}
\def\[{\begin{equation*}}
\def\]{\end{equation*}}
\def\cK{{\cal K}}
\def\cM{{\cal M}}
\def\cC{{\cal C}}
\def\cT{{\cal T}}
\def\cP{{\cal P}}
\def\hw{{\hat w}}
\def\by{{\bar y}}
\def\bz{{\bar z}}
\def\bx{{\bar x}}
\def\bw{{\bar w}}
\title{On the Relationships among GPU-Accelerated  First-Order Methods for Solving Linear Programming\thanks{September 30, 2025 \funding{The work of Defeng Sun was supported by the Research Center for Intelligent Operations Research, RGC {Senior Research Fellow Scheme No. SRFS2223-5S02}, and  {GRF Project No. 15307822}. The work of Yancheng Yuan was supported by the RGC Early Career Scheme (Project No.
25305424) and the Research Center for Intelligent Operations Research. The work of Xinyuan Zhao was supported in part by the National Natural Science Foundation of China under Project { No. 12271015}.
}
}}
\author{Kaihuang Chen\thanks{Department of Applied Mathematics, The Hong Kong Polytechnic University, Hung Hom, Hong Kong,
  (\email{kaihuang.chen@connect.polyu.hk}).}
\and Defeng Sun\thanks{Department of Applied Mathematics, The Hong Kong Polytechnic University, Hung Hom, Hong Kong,
(\email{defeng.sun@polyu.edu.hk}).} \and 
Yancheng Yuan\thanks{Department of Applied Mathematics, The Hong Kong Polytechnic University, Hung Hom, Hong Kong,
(\email{yancheng.yuan@polyu.edu.hk}).}
\and Guojun Zhang\thanks{Department of Applied Mathematics, The Hong Kong Polytechnic University, Hung Hom, Hong Kong,
(\email{guojun.zhang@connect.polyu.hk}).}
\and Xinyuan Zhao\thanks{Department of Mathematics, Beijing University of Technology, Beijing, P.R. China,
(\email{xyzhao@bjut.edu.cn}).}
}
\begin{document}

\maketitle

\begin{abstract}
This paper aims to understand the relationships among recently developed GPU-accelerated first-order methods (FOMs) for linear programming (LP), with particular emphasis on HPR-LP---a Halpern Peaceman--Rachford (HPR) method for LP. Our findings can be summarized as follows: (i) the base algorithm of cuPDLPx, a recently released GPU solver, is a special case of the base algorithm of HPR-LP, thereby showing that cuPDLPx is another concrete implementation instance of HPR-LP; (ii) once the active sets have been identified, HPR-LP and EPR-LP---an ergodic PR method for LP---become equivalent under the same initialization; and (iii) extensive numerical experiments on benchmark datasets demonstrate that HPR-LP achieves the best overall performance among current GPU-accelerated LP solvers. These findings provide a strong motivation for using the HPR method as a baseline to further develop GPU-accelerated LP solvers and beyond.
\end{abstract}

\begin{keywords}
GPU acceleration,
Linear programming,
Halpern Peaceman--Rachford, Proximal alternating direction method of multipliers
\end{keywords}

\begin{MSCcodes}
90C05 · 90C06 · 90C25 · 65Y20
\end{MSCcodes}

\section{Introduction}\label{sec:intro}
In this paper, we focus on GPU-accelerated first-order methods (FOMs) for solving large-scale linear programming (LP) problems. Our aim is to develop a deep understanding of the connections among recent algorithmic developments and to provide insights that will guide the design of future high-performance LP solvers. Specifically, we consider the following general form of LP:
\begin{equation}\label{model:primalLP}
\begin{aligned}
\min _{x \in \mathbb{R}^n} &\quad  \langle c, x \rangle \\
\text { s.t. } & Ax\in \mathcal{K},\\
& x \in \mathcal{C},
\end{aligned}
\end{equation}
where  $c \in \mathbb{R}^n$ is the objective vector, $A \in \mathbb{R}^{m \times n}$ is the constraint matrix, $\mathcal{K} := \{s \in \mathbb{R}^m : l_c \leq s \leq u_c\}$ with bounds 
$l_c \in (\mathbb{R} \cup \{-\infty\})^m$ and $u_c \in (\mathbb{R} \cup \{\infty\})^m$, and $\mathcal{C} := \{x \in \mathbb{R}^n : l_v \leq x \leq u_v\}$ with bounds 
$l_v \in (\mathbb{R} \cup \{-\infty\})^n$ and $u_v \in (\mathbb{R} \cup \{\infty\})^n$.
 The corresponding dual problem is:
\begin{equation}\label{model:dualLP}
\begin{aligned}
\min_{y \in \mathbb{R}^m,\; z \in \mathbb{R}^n} \quad & \delta_{\mathcal{K}}^*(-y) + \delta_{\mathcal{C}}^*(-z) \\
\text{s.t.} \quad & A^{*} y + z = c,
\end{aligned}
\end{equation}
where $\delta_S^*(\cdot)$ denotes the convex conjugate of the indicator function $\delta_S(\cdot)$ associated with a closed convex set $S$.

Classical LP algorithms such as the simplex method~\cite{dantzig1963linear} and interior-point methods~\cite{karmarkar1984new} relied on factorization-based approaches. Commercial solvers like CPLEX~\cite{cplex2009v12} and Gurobi~\cite{gurobi} combined simplex and interior-point methods with highly optimized sparse linear algebra, delivering state-of-the-art CPU performance. Yet their reliance on inherently sequential matrix factorizations makes it difficult to fully exploit massively parallel GPUs with high memory bandwidth~\cite{lu2025overview}. Even GPU-accelerated interior-point solvers like CuClarabel~\cite{goulart2024clarabel,chen2024cuclarabel}, which leverage mixed precision, still depend on direct factorizations and thus face scalability challenges on large-scale problems.

In recent years, first-order methods (FOMs) have attracted growing attention for large-scale LPs due to their low per-iteration cost and high parallelizability. Representative solvers include PDLP~\cite{applegate2021practical,applegate2023faster}, ECLIPSE~\cite{basu2020eclipse}, ABIP~\cite{lin2021admm,deng2024enhanced}, and HPR-LP~\cite{chen2024hpr}, with related convex conic or quadratic solvers such as SCS~\cite{o2016conic,o2021operator}, OSQP~\cite{stellato2020osqp},  PDQP~\cite{lu2025practical}, PDHCG~\cite{huang2024restarted}, PDCS~\cite{lin2025pdcs}, and HPR-QP~\cite{chen2025hpr}. A milestone in this line is PDLP~\cite{applegate2021practical,applegate2025pdlp}, based on the primal dual hybrid gradient (PDHG) method~\cite{zhu2008efficient,esser2010general,chambolle2011first}, which is equivalent to linearized ADMM with unit stepsize (or linearized Douglas--Rachford)~\cite{esser2010general,chambolle2011first}. Its main computational advantage is that it relies only on sparse matrix-vector multiplications and simple projections, avoiding matrix factorizations. Practical enhancements such as restarts with ergodic sequences, adaptive penalty updates, and heuristic line search further improve robustness and efficiency. Notably, PDLP received the 2024 Beale--Orchard-Hays Prize for Excellence in Computational Mathematical Programming. Its GPU implementations, including cuPDLP.jl~\cite{lu2023cupdlp}, cuPDLP-C~\cite{lu2023cupdlp_c}, and NVIDIA’s cuOpt\footnote{\url{https://developer.nvidia.com/blog/accelerate-large-linear-programming-problems-with-nvidia-cuopt}}, have demonstrated competitive performance against commercial solvers on large-scale LP benchmarks. Furthermore, instead of relying on costly heuristic line search as in PDLP, Chen et al.~\cite{chen2025peaceman} established the ergodic convergence of the semi-proximal Peaceman--Rachford (PR) method, which admits larger effective stepsizes at low cost. Building on this foundation, a preliminary implementation of ergodic PR for LP with practical enhancements, termed EPR-LP, has already demonstrated superior empirical performance over cuPDLP.jl~\cite{zhangergodic}.

From a theoretical standpoint, ergodic sequences of semi-proximal ADMM, including PDHG, achieve an $O(1/k)$ iteration complexity for objective error and feasibility violation~\cite{davis2016convergence,cui2016convergence}. Recently, building on the ergodic $O(1/k)$ complexity of PDHG established by Chambolle and Pock~\cite{chambolle2011first,chambolle2016ergodic}, Applegate et al.~\cite{applegate2023faster} proved that PDHG achieves the ergodic $O(1/k)$ complexity for LP problems when measured by primal-dual feasibility violation and the primal-dual gap, both of which can be inferred from the Karush--Kuhn--Tucker (KKT) residual. For the KKT residual itself, Monteiro and Svaiter~\cite{monteiro2013iteration} established an ergodic $O(1/k)$ rate for ADMM 
(with unit dual stepsize) in terms of the $\varepsilon$-subdifferential. This result was later extended to sPADMM by Shen and Pan~\cite{shen2016weighted}, and further to the semi-proximal PR method by Chen et al.~\cite{chen2025peaceman}, implying an ergodic $O(1/\sqrt{k})$ complexity for the KKT residual.

In contrast, HPR-LP~\cite{chen2024hpr}, a Halpern Peaceman--Rachford (HPR) method for LP, achieves a nonergodic $O(1/k)$ rate directly in terms of the KKT residual. It is derived by accelerating the preconditioned (semi-proximal) PR method~\cite{zhang2022efficient,yang2025accelerated,sun2025accelerating} through the Halpern iteration~\cite{halpern1967fixed,lieder2021convergence,sabach2017first}. Specifically, Zhang et al.~\cite{zhang2022efficient} applied the Halpern iteration to the classical PR method~\cite{eckstein1992douglas,lions1979splitting}, obtaining the HPR method without proximal terms and establishing an $O(1/k)$ rate for both the KKT residual and the objective error. Subsequently, Sun et al.~\cite{sun2025accelerating} reformulated the semi-proximal PR method as a degenerate proximal point method (dPPM)~\cite{bredies2022degenerate} with a positive semidefinite preconditioner, and applied the Halpern iteration to obtain the HPR method with semi-proximal terms, which also achieves an $O(1/k)$ rate. Building on these advances, Chen et al.~\cite{chen2024hpr} developed HPR-LP, a GPU-accelerated solver for large-scale LP that significantly outperformed the award-winning PDLP~\cite{applegate2021practical,applegate2023faster,lu2023cupdlp}. Inspired by the work of HPR-LP, Lu and Yang~\cite{lu2024restarted} proposed the reflected restarted Halpern PDHG ($r^2$HPDHG), a special case of the HPR method~\cite{sun2025accelerating}. Building on this, Lu et al.~\cite{lu2025cupdlpx} released an efficient GPU implementation, cuPDLPx, which exhibits superior performance compared to cuPDLP.jl \cite{lu2023cupdlp}.

Despite significant progress in GPU-accelerated FOMs for LP, their interrelationships remain unclear. This paper addresses this gap by investigating the connections among recent approaches, with a focus on HPR-LP and its links to other solvers. The main contributions can be summarized as follows:

\begin{enumerate}
    \item  We prove that the base algorithm of cuPDLPx ($r^2$HPDHG) is a special case of the base algorithm of HPR-LP, implying that cuPDLPx is another concrete implementation instance of HPR-LP.
    \item  We show that, under the strict complementarity condition, the PR update in the HPR method for LP becomes affine after finitely many iterations. Consequently, after active sets are identified, HPR-LP and EPR-LP are equivalent under consistent initialization based on the equivalence between Halpern iteration and ergodic iteration for affine fixed-point maps. 
    \item We present extensive experiments on LP benchmark datasets, evaluating the impact of algorithmic enhancements and demonstrating that HPR-LP achieves the best overall performance among current GPU-accelerated LP solvers.
\end{enumerate}

The remainder of this paper is organized as follows. Section~\ref{sec:2} briefly reviews the HPR method for solving LP. Section~\ref{sec:3} discusses the relationship between cuPDLPx and HPR-LP as well as the connection between HPR-LP and EPR-LP. Section~\ref{sec:4} presents extensive numerical results on recent GPU-accelerated FOMs for LP across different benchmark sets. Finally, Section~\ref{sec:5} concludes the paper.

\paragraph{Notation} Let \(\mathbb{R}^n\) be the \(n\)-dimensional Euclidean space with inner product \(\langle \cdot, \cdot \rangle\) and norm \(\|\cdot\|\). For \(A \in \mathbb{R}^{m \times n}\), let \(A^{*}\) be its transpose and \(\|A\| := \sqrt{\lambda_{1}(AA^{*})}\) its spectral norm, where \(\lambda_{1}(\cdot)\) denotes the largest eigenvalue. For any self-adjoint positive semidefinite operator \(\mathcal{M}: \mathbb{R}^n \to \mathbb{R}^n\), define the seminorm \(\|x\|_{\cM} := \sqrt{\langle x, \cM x \rangle}\). For a convex function \(f: \mathbb{R}^n \to (-\infty,+\infty]\), let \(\partial f(\cdot)\) be its subdifferential and 
\(\operatorname{Prox}_f(x) := \arg\min_{z \in \mathbb{R}^n} \left\{ f(z) + \tfrac{1}{2}\|z-x\|^2 \right\}\) its proximal mapping. For a convex set \(C \subseteq \mathbb{R}^n\), we use the following notations:  
the indicator function \(\delta_C(x) := 0\) if \(x \in C\) and \(+\infty\) otherwise;  
the distance under \(\|\cdot\|_{\cM}\), \(\operatorname{dist}_{\cM}(x,C) := \inf_{z \in C} \|x-z\|_{\cM}\);  
the Euclidean projection \(\Pi_C(x) := \arg\min_{z \in C} \|x-z\|\);  
and the normal cone at \(x \in C\), denoted by \(\mathcal{N}_C(x)\).

\section{HPR-LP: An HPR method for solving LP}\label{sec:2}
In this section,  we begin by presenting the base algorithm of HPR-LP \cite{chen2024hpr}, followed by a discussion of its convergence guarantees and complexity results, which motivate subsequent algorithmic enhancements.  

\subsection{Base algorithm}
For any $(y,z,x)\in \mathbb{R}^{m} \times \mathbb{R}^{n} \times \mathbb{R}^{n}$, the augmented Lagrangian of the dual problem~\eqref{model:dualLP} is
$$
L_{\sigma}(y,z;x):=\delta_{\mathcal{K}}^*(-y) + \delta_{\mathcal{C}}^{*}(-z)+\langle x, A^{*}y+z-c \rangle +\frac{\sigma}{2}\|A^{*}y+z-c\|^2,
$$
where $\sigma>0$ is a penalty parameter. For notational convenience, let $w:=(y,z,x)\in \mathbb{W}:= \mathbb{R}^{m} \times \mathbb{R}^{n} \times \mathbb{R}^{n}$. Then, an HPR method with semi-proximal terms \cite{sun2025accelerating,chen2024hpr} for solving problems \eqref{model:primalLP} and \eqref{model:dualLP} is summarized in Algorithm~\ref{alg:sp-HPR}.  
\begin{algorithm}[ht!]
		\caption{An HPR method  with semi-proximal terms for the problem \eqref{model:dualLP}}
		\label{alg:sp-HPR}
		\begin{algorithmic}[1]
			\STATE {\textbf{Input:} Set the penalty parameter \(\sigma > 0\). Let \(\mathcal{T}_1: \mathbb{R}^m \to \mathbb{R}^m\) be a self-adjoint positive semidefinite linear operator such that \(\mathcal{T}_1 + AA^{*}\) is positive definite. Denote $w=(y,z,x)$ and $\bw=(\by,\bz,\bx)$.
            Choose an initial point \(w^0 = (y^0, z^0, x^0) \in \mathbb{R}^{m}  \times \mathbb{R}^n \times \mathbb{R}^n\).\vspace{3pt}}
               \FOR{$k=0,1,...,$ \vspace{3pt}}
			\STATE {Step 1. $\displaystyle \bz^{k+1}=\underset{z \in \mathbb{R}^{n}}{\arg \min }\left\{L_{\sigma}\left(y^k, z ; x^k\right)\right\}$;}
			\STATE{Step 2. $ \displaystyle \bx^{k+1}={x}^k+\sigma (A^{*}{y}^{k}+\bz^{k+1}-c) $;}
			\STATE {Step 3. $\displaystyle \by^{k+1}=\underset{y \in \mathbb{R}^{m}}{\arg \min }\left\{L_{\sigma}\left(y, \bz^{k+1} ; \bx^{k+1}\right)+\frac{\sigma}{2}\|y-y^{k}\|_{\mathcal{T}_1}^2\right \}$;}
			\STATE {Step 4. $\displaystyle \hw^{k+1}= 2\bw^{k+1}-{w}^{k} $;}

			\STATE {Step 5.  $\displaystyle w^{k+1}=\frac{1}{k+2} w^{0}+\frac{k+1}{k+2}\hw^{k+1}$; \vspace{3pt}}
               \ENDFOR \vspace{3pt}
             \STATE{\textbf{Output:} Iteration sequence $\{\bw^k\}$.}
		\end{algorithmic}
	\end{algorithm}
\begin{remark}
Steps 1–3 correspond to the semi-proximal DR method~\cite{glowinski1975approximation,gabay1976dual}. Adding Step 4 (relaxation) yields the semi-proximal PR method, and Step 5 introduces Halpern iteration with stepsize $1/(k+2)$~\cite{halpern1967fixed,lieder2021convergence}. Together, Algorithm~\ref{alg:sp-HPR} is an accelerated preconditioned ADMM (pADMM) with parameters $\alpha=2$ and $\rho=2$~\cite{sun2025accelerating}.
\end{remark}

According to \cite[Corollary 28.3.1]{rockafellar1970convex}, a pair $({y}^*, {z}^*) \in \mathbb{R}^{m} \times \mathbb{R}^{n}$ is an optimal solution to problem~\eqref{model:dualLP} if there exists ${x}^* \in \mathbb{R}^{n}$ such that $({y}^*, {z}^*, {x}^*)$ satisfies the following KKT system:
\begin{equation}\label{eq:KKT}
	0 \in Ax^* - \partial \delta^{*}_{\cK}(-y^*), 
	\quad  
	0 \in x^* - \partial \delta^{*}_{\cC}(-z^*), 
	\quad  
	A^{*}{y}^* + {z}^* - c = 0.
\end{equation}
We make the following assumption:
\begin{assumption}\label{ass:CQ}
There exists a vector $({y}^*, {z}^*, {x}^*) \in \mathbb{R}^{m} \times \mathbb{R}^{n} \times \mathbb{R}^{n}$ satisfying the KKT system~\eqref{eq:KKT}.
\end{assumption}
Under Assumption~\ref{ass:CQ}, solving the primal--dual pair \eqref{model:primalLP}--\eqref{model:dualLP} is equivalent to finding a point $w^{*} \in \mathbb{R}^{m} \times \mathbb{R}^{n} \times \mathbb{R}^{n}$ such that $0 \in \cT w^{*}$, where the maximal monotone operator $\cT$ is defined as
\begin{equation}\label{def:T}
	\cT w =
	\begin{pmatrix}
		-\partial \delta^{*}_{\cK}(-y) + A x \\
		-\partial \delta^{*}_{\cC}(-z) + x \\
		c - A^{*} y - z
	\end{pmatrix}
	\quad \forall w=(y,z,x) \in \mathbb{R}^{m} \times \mathbb{R}^{n} \times \mathbb{R}^{n}.
\end{equation}
The global convergence of Algorithm~\ref{alg:sp-HPR} is established in the following proposition.

\begin{proposition}[Corollary 3.5 in \cite{sun2025accelerating}]
Suppose that Assumption~\ref{ass:CQ} holds. Then the sequence 
\(\{\bw^k\} = \{(\by^k, \bz^k, \bx^k)\}\) generated by the HPR method with semi-proximal terms in Algorithm~\ref{alg:sp-HPR} converges to a point \(w^{*} = (y^*, z^*, x^*)\), where \((y^*, z^*)\) solves the dual problem~\eqref{model:dualLP} and \(x^*\) solves the primal problem~\eqref{model:primalLP}.
\end{proposition}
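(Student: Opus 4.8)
The plan is to recognize Algorithm~\ref{alg:sp-HPR} as a Halpern-accelerated degenerate proximal point method (dPPM) for the inclusion $0 \in \cT w$ and to invoke the convergence theory for such schemes. First I would isolate the ``PR part'' of the iteration, namely Steps~1--4, and write it as a single fixed-point map $w^k \mapsto \hw^{k+1} =: \cP(w^k)$. Concretely, eliminating $\bz^{k+1}$, $\bx^{k+1}$, $\by^{k+1}$ through the optimality conditions of the two $\arg\min$ subproblems (which, since $\delta^*_{\cK}$ and $\delta^*_{\cC}$ are conjugates of indicators, reduce to proximal/projection relations) exhibits $\cP$ as the semi-proximal Peaceman--Rachford operator for the splitting of $\cT$ underlying the augmented Lagrangian $L_\sigma$. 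Step~5 is then identifiably the Halpern update $w^{k+1} = \frac{1}{k+2}w^0 + \frac{k+1}{k+2}\cP(w^k)$ with anchor $w^0$ and stepsize $\lambda_k = 1/(k+2)$.

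Next I would construct the self-adjoint positive semidefinite preconditioner $\cM$ for which the map $w \mapsto \bw$ defined by Steps~1--3 is the degenerate resolvent $(\cM + \cT)^{-1}\cM$, so that the reflection $\cP = 2(\cM+\cT)^{-1}\cM - \mathrm{Id}$ is nonexpansive in the seminorm $\|\cdot\|_{\cM}$. The block structure of $\cM$ is dictated by the $\alpha = 2$, $\rho = 2$ parameters identified in the Remark, and the hypothesis that $\cT_1 + AA^*$ is positive definite is exactly what guarantees $\cM \succeq 0$ together with the well-posedness of the $y$-subproblem. I would then verify two facts: (i) $\cT$ is maximal monotone, being the saddle operator built from the subdifferentials of the closed proper convex functions $\delta^*_{\cK}$, $\delta^*_{\cC}$ together with a skew-symmetric linear part; and (ii) $\operatorname{Fix}(\cP) = \cT^{-1}(0)$, which is nonempty under Assumption~\ref{ass:CQ} since \eqref{eq:KKT} exhibits a zero of $\cT$.

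With $\cP$ an $\cM$-nonexpansive map having nonempty fixed-point set, I would apply the convergence theorem for Halpern iterations of (degenerate) nonexpansive operators: because $\lambda_k \to 0$, $\sum_k \lambda_k = \infty$, and $\sum_k|\lambda_{k+1}-\lambda_k| < \infty$ for $\lambda_k = 1/(k+2)$, the iterates $\{w^k\}$ converge to the fixed point of $\cP$ that is the $\cM$-projection of $w^0$ onto $\operatorname{Fix}(\cP)$. Passing through the defining relations of Steps~1--3 shows that $\{\bw^k\}$ shares the same limit $w^* = (y^*,z^*,x^*)$, and $w^* \in \cT^{-1}(0)$ means, by the equivalence recorded after \eqref{def:T}, that $(y^*,z^*)$ solves \eqref{model:dualLP} and $x^*$ solves \eqref{model:primalLP}.

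The main obstacle I anticipate is the degeneracy of $\cM$: because it is only positive \emph{semi}definite, $\|\cdot\|_{\cM}$ is a genuine seminorm, so the standard Hilbert-space Halpern argument yields control only of $\cM^{1/2}w^k$ rather than of $w^k$ itself. Overcoming this requires the finer dPPM analysis---showing that the directions in $\ker\cM$ are pinned down by the range structure of $\cM + \cT$ and by the vanishing of the successive differences $\bw^{k+1}-w^k$---so that seminorm convergence upgrades to convergence of the full iterate and of every block $(\by^k,\bz^k,\bx^k)$. This is precisely the technical content supplied by the degenerate preconditioned proximal point framework, and checking that the LP operator $\cT$ and the preconditioner $\cM$ satisfy its hypotheses is where the real work lies; the Halpern and monotonicity ingredients are then routine.
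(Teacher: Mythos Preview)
The paper does not supply a proof of this proposition; it is stated as a direct citation of Corollary~3.5 in \cite{sun2025accelerating}. Your outline accurately reconstructs the machinery behind that cited result---the reformulation of the semi-proximal PR map (Steps~1--4) as a degenerate resolvent with the semidefinite preconditioner $\cM$ of \eqref{def:M}, the Halpern layer in Step~5, and the need to upgrade seminorm convergence to full convergence via the dPPM analysis---so your approach is exactly the one the paper defers to.
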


Next, consider the self-adjoint positive semidefinite linear operator $\cM: \mathbb{R}^{m} \times \mathbb{R}^{n} \times \mathbb{R}^{n} \to \mathbb{R}^{m} \times \mathbb{R}^{n} \times \mathbb{R}^{n}$ defined by
\begin{equation}\label{def:M}
	\cM =
	\begin{bmatrix}
		\sigma A A^{*} + \sigma \cT_1 & 0 & A \\
		0 & 0 & 0 \\
		A^{*} & 0 & \frac{1}{\sigma} I_{n}
	\end{bmatrix},
\end{equation}
where $I_{n}$ denotes the $n \times n$ identity matrix. To analyze the complexity of the HPR method with semi-proximal terms, we consider the KKT residual and the objective error. The residual mapping associated with the KKT system~\eqref{eq:KKT}, as introduced in \cite{han2018linear}, is given by
\begin{equation}\label{def:KKT_residual}
	\mathcal{R}(w) =
	\begin{pmatrix}
		Ax - \Pi_{\cK}(Ax - y) \\
		x - \Pi_{\cC}(x - z) \\
		c - A^{*} y - z
	\end{pmatrix}
	\quad \forall w=(y,z,x) \in \mathbb{R}^{m} \times \mathbb{R}^{n} \times \mathbb{R}^{n}.
\end{equation}
Furthermore, let $\{(\by^k,\bz^k)\}$ be the sequence generated by Algorithm~\ref{alg:sp-HPR}. We define the objective error as
\[
h(\by^{k+1}, \bz^{k+1}) := \delta_{\cK}^{*}(-\by^{k+1}) + \delta_{\cC}^{*}(-\bz^{k+1})
- \delta_{\cK}^{*}(-y^{*}) - \delta_{\cC}^{*}(-z^*) 
\quad \forall k \geq 0,
\]
where $(y^*, z^*)$ is the limit point of the sequence $\{(\by^k, \bz^k)\}$. The complexity of the HPR method with semi-proximal terms is summarized in Theorem \ref{Th:complexity-acc-pADMM}.
\begin{theorem}[Proposition 2.9 and Theorem 3.7 in \cite{sun2025accelerating}]\label{Th:complexity-acc-pADMM}
Suppose that Assumption~\ref{ass:CQ} holds. Let $\{w^k\}=\{(y^{k},z^{k},x^{k})\}$ and $\{\bw^k\}=\{(\by^{k},\bz^{k},\bx^{k})\}$ be two sequences generated by the HPR method with semi-proximal terms in Algorithm~\ref{alg:sp-HPR}, and let $w^*=(y^*,z^*,x^*)$ be its limit point. Define $R_0=\|w^{0}-w^{*}\|_{\cM}$. Then for all $k \geq 0$, the following iteration complexity bounds hold:
\begin{subequations}\label{eq:complexity-bounds}
\begin{align}
\|\bw^{k+1}-{w}^{k}\|_{\cM} &\leq \frac{R_{0}}{k+1}, \label{eq:complexity-bound-M}\\
\|\mathcal{R}(\bar w^{k+1})\| &\leq 
\left( \frac{\sigma (\|A\|+\|\sqrt{\cT_1}\|)+1}{\sqrt{\sigma}} \right) \frac{R_0}{k+1}, \label{eq:complexity-bound-KKT}\\
\left(-\frac{1}{\sqrt{\sigma}}\|x^*\|\right)\frac{R_0}{k+1} 
&\leq h(\by^{k+1},\bz^{k+1}) 
\leq \left(3R_0 + \frac{1}{\sqrt{\sigma}}\|x^*\|\right)\frac{R_0}{k+1}. \label{eq:complexity-bound-obj}
\end{align}
\end{subequations}
\end{theorem}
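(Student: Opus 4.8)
The plan is to read Algorithm~\ref{alg:sp-HPR} as a Halpern-accelerated degenerate proximal point method (dPPM) for the inclusion $0\in\cT w$ with the positive semidefinite preconditioner $\cM$, and then to derive the three bounds in the order \eqref{eq:complexity-bound-M} $\Rightarrow$ \eqref{eq:complexity-bound-KKT} $\Rightarrow$ \eqref{eq:complexity-bound-obj}. Following the reformulation of Sun et al.~\cite{sun2025accelerating}, Steps~1--4 define a map $w^k\mapsto\hw^{k+1}=2\bw^{k+1}-w^k$ that is nonexpansive in the seminorm $\|\cdot\|_{\cM}$ (it is the semi-proximal PR operator, i.e.\ a reflection of the dPPM resolvent, whose fixed points are exactly the zeros of $\cT$), while Step~5 is the Halpern iteration with the exact-optimal stepsize $1/(k+2)$. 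The only structural hypothesis needed for $\cM$ to induce firm nonexpansiveness of this map is that $\cT_1+AA^*$ be positive definite, which is assumed.

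For \eqref{eq:complexity-bound-M} I would invoke the optimal Halpern fixed-point residual rate (Proposition~2.9 in \cite{sun2025accelerating}, in the spirit of Lieder~\cite{lieder2021convergence}) applied to the nonexpansive map $\cF$ above, which yields $\|\cF(w^k)-w^k\|_{\cM}\le 2R_0/(k+1)$. Since Step~4 gives $\hw^{k+1}-w^k=\cF(w^k)-w^k=2(\bw^{k+1}-w^k)$, dividing by $2$ produces $\|\bw^{k+1}-w^k\|_{\cM}\le R_0/(k+1)$ directly.

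The second bound \eqref{eq:complexity-bound-KKT} I would obtain by expressing the components of $\mathcal{R}(\bw^{k+1})$ through the optimality conditions of Steps~1--3 in terms of the increments $\dx:=\bx^{k+1}-x^k$ and $\dy:=\by^{k+1}-y^k$. The first-order condition of Step~1 together with Step~2 gives $\bx^{k+1}\in\partial\delta^*_{\cC}(-\bz^{k+1})$, i.e.\ $\bx^{k+1}=\Pi_{\cC}(\bx^{k+1}-\bz^{k+1})$, so the second component of $\mathcal{R}$ vanishes identically. Step~2 gives the feasibility residual $c-A^*\by^{k+1}-\bz^{k+1}=-(\tfrac1\sigma\dx+A^*\dy)$, and the first-order condition of Step~3 writes the first component (after using nonexpansiveness of $\Pi_{\cK}$) in terms of $\sigma A(A^*\by^{k+1}+\bz^{k+1}-c)+\sigma\cT_1\dy$. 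Matching against the exact identity $\|\bw^{k+1}-w^k\|_{\cM}^2=\|\sqrt\sigma A^*\dy+\tfrac1{\sqrt\sigma}\dx\|^2+\sigma\|\dy\|_{\cT_1}^2$, one sees the feasibility term is bounded by $\tfrac1{\sqrt\sigma}\|\bw^{k+1}-w^k\|_{\cM}$ and the $\cK$-term by $\sqrt\sigma(\|A\|+\|\sqrt{\cT_1}\|)\|\bw^{k+1}-w^k\|_{\cM}$, giving the constant $\bigl(\sigma(\|A\|+\|\sqrt{\cT_1}\|)+1\bigr)/\sqrt\sigma$; combining with \eqref{eq:complexity-bound-M} finishes this bound.

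The two-sided objective bound \eqref{eq:complexity-bound-obj} is where I expect the real work, and it is the main obstacle. For the lower bound I would use convexity of the conjugates together with weak duality, so that the gap is bounded below by $\langle x^*,\,c-A^*\by^{k+1}-\bz^{k+1}\rangle\ge-\|x^*\|\,\|c-A^*\by^{k+1}-\bz^{k+1}\|$, which the feasibility estimate above controls and which produces the factor $\tfrac1{\sqrt\sigma}\|x^*\|$. For the upper bound one pairs $h(\by^{k+1},\bz^{k+1})$ against the saddle-point inequality for $(y^*,z^*,x^*)$: this again contributes a feasibility-residual term of order $\tfrac1{\sqrt\sigma}\|x^*\|\,R_0/(k+1)$ and, in addition, a term of order $R_0\cdot\|\bw^{k+1}-w^k\|_{\cM}$ arising from the proximal/Halpern descent estimate, from which the constant $3R_0$ emerges as in the general pADMM objective-error analysis (Theorem~3.7 in \cite{sun2025accelerating}). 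The delicate point throughout is to keep the saddle-point inequalities and the $\cM$-seminorm estimates aligned so that the feasibility residual, the increment $\|\bw^{k+1}-w^k\|_{\cM}$, and the multiplier norm $\|x^*\|$ recombine into the precise stated constants rather than into a mere $O(1/k)$ statement.
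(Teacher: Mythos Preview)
The paper does not prove this theorem; it is stated verbatim as Proposition~2.9 and Theorem~3.7 of \cite{sun2025accelerating} and then used as a black box to motivate the restart and $\sigma$-update rules in Section~\ref{subsec:algo-enhance}. There is thus no in-paper proof to compare against.

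Your plan coincides with the approach of the cited reference. For \eqref{eq:complexity-bound-M}, the dPPM reformulation makes the PR map $\cM$-nonexpansive and Lieder's Halpern rate gives the residual bound, which halves to $R_0/(k+1)$ exactly as you say. For \eqref{eq:complexity-bound-KKT}, your reading of the optimality conditions is correct (in particular the $\cC$-component of $\mathcal{R}(\bw^{k+1})$ vanishes, and the feasibility block is $-(\tfrac1\sigma\dx+A^*\dy)$ with $\cM$-norm at most $\tfrac1{\sqrt\sigma}\|\bw^{k+1}-w^k\|_{\cM}$); one small sharpening you will need to hit the stated constant is that the first component satisfies
\[
A\bx^{k+1}-\Pi_{\cK}(A\bx^{k+1}-\by^{k+1})
= (I-\Pi_{\cK})(A\bx^{k+1}-\by^{k+1}) - (I-\Pi_{\cK})(s-\by^{k+1}),
\]
where $s\in\partial\delta^*_{\cK}(-\by^{k+1})$ is the element from Step~3's optimality condition, so nonexpansiveness of $I-\Pi_{\cK}$ (not merely of $\Pi_{\cK}$) gives the bound $\|A\bx^{k+1}-s\|$ without a factor~$2$. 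For \eqref{eq:complexity-bound-obj}, the lower bound via $h\ge\langle x^*,\,c-A^*\by^{k+1}-\bz^{k+1}\rangle$ is exactly right. The upper bound, which you correctly flag as the only substantive step, is obtained in \cite{sun2025accelerating} by bounding $L(\by^{k+1},\bz^{k+1};x^*)-L(y^*,z^*;x^*)$ through the pADMM one-step inequality by an $\cM$-inner product of $\bw^{k+1}-w^k$ against $\bw^{k+1}-w^*$, and then invoking the Halpern boundedness $\|w^k-w^*\|_{\cM}\le R_0$ (hence $\|\bw^{k+1}-w^*\|_{\cM}\le 2R_0$) to produce the $3R_0$ constant.
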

The results above establish that the HPR method (Algorithm \ref{alg:sp-HPR}) enjoys an $O(1/k)$ complexity rate in terms of the KKT residual and the objective error. These properties motivate the use of restart strategies and adaptive parameter updates, which will be discussed in the next subsection.

\subsection{Algorithmic enhancements}\label{subsec:algo-enhance}
Several enhancements have been proposed to improve the performance of the HPR method for solving LP \cite{chen2024hpr} and convex composite quadratic programming (CCQP) \cite{chen2025hpr}. In particular, restart strategies and adaptive updates of the penalty parameter \(\sigma\), motivated by the $O(1/k)$ complexity results in Theorem~\ref{Th:complexity-acc-pADMM}, have proven effective. For completeness, we summarize the HPR-LP framework with these enhancements in Algorithm~\ref{alg:HPR_LP}.

\begin{algorithm}[ht!]
		\caption{HPR-LP: A Halpern Peaceman--Rachford method for the LP problem \eqref{model:dualLP} (cf.~\cite{chen2024hpr})}
		\label{alg:HPR_LP}
		\begin{algorithmic}[1]
			\STATE {\textbf{Input:} Let \(\mathcal{T}_1: \mathbb{R}^m \to \mathbb{R}^m\) be a self-adjoint positive semidefinite linear operator such that \(\mathcal{T}_1 +  AA^{*}\) is positive definite. {Denote $w=(y,z,x)$ and $\bw=(\by,\bz,\bx)$.} Choose an initial point \(w^{0,0} = (y^{0,0}, z^{0,0}, x^{0,0}) \in \mathbb{R}^m \times \mathbb{R}^n \times \mathbb{R}^n\).\vspace{3pt} }
                \STATE{\textbf{Initialization}: Set the outer loop counter \(r = 0\), the total loop counter \(k = 0\), and the initial penalty parameter \(\sigma_{0} > 0\). \vspace{3pt}}
                \REPEAT  \vspace{3pt}
                       \STATE{{initialize the inner loop: set inner loop counter $t=0$;} \vspace{3pt}}
                       \REPEAT \vspace{3pt}
			\STATE {$\displaystyle \bz^{r,t+1}=\underset{z \in \mathbb{R}^{n}}{\arg \min }\left\{L_{\sigma_r}\left(y^{r,t}, z ; x^{r,t}\right)\right\}$; \vspace{3pt} }
			\STATE{$\displaystyle \bx^{r,t+1}={x}^{r,t}+{\sigma_r} (A^{*}{y}^{r,t}+\bz^{r,t+1}-c) $; \vspace{3pt}}
			\STATE {$\displaystyle \by^{r,t+1}=\underset{y \in \mathbb{R}^{m}}{\arg \min }\left\{L_{\sigma_r}\left(y, \bz^{r,t+1} ; \bx^{r,t+1}\right)+\frac{\sigma_{r}}{2}\|y-y^{r,t}\|_{\mathcal{T}_1}^2\right \};$ \vspace{3pt}	}
			\STATE {$\displaystyle \hw^{r,t+1}= 2\bw^{r,t+1}-{w}^{r,t} $; \vspace{3pt} }
			\STATE {$\displaystyle w^{r,t+1}=\frac{1}{t+2} w^{r,0}+\frac{t+1}{t+2}\hw^{r,t+1}$; \vspace{3pt}}
   			\STATE {$t=t+1$, $k=k+1$;}
                 \UNTIL one of the restart criteria holds or all termination criteria hold \vspace{3pt}
                 \STATE{\textbf{restart the inner loop:}
                 $\tau_{r}=t, w^{r+1,0}=\bw^{r,\tau_{r}}$ \vspace{3pt}},
                  \STATE{$\displaystyle \sigma_{r+1} ={\rm \textbf{SigmaUpdate} \vspace{3pt}}(\bw^{r,\tau_{r}},w^{r,0},\cT_1,A)$, $r=r+1;$ \vspace{3pt}}
             \UNTIL   termination criteria hold \vspace{3pt}
             \STATE{\textbf{Output:} $\{\bw^{r,t}\}$.}
		\end{algorithmic}
	\end{algorithm}

\subsubsection{Restart strategy}
Restarting has been recognized as particularly important for Halpern iterations. As noted in Theorem~\ref{Th:complexity-acc-pADMM}, the complexity bound depends on the weighted distance $R_0$ between the initial point and the optimal solution. Consequently, as the iterates approach optimality, continuing to reference a distant initial anchor becomes counterproductive, whereas resetting the anchor to the current iterate helps reduce the bound and refocus the iteration near the solution. This observation motivates the merit function
\[
R_{r,t} := \|w^{r,t} - w^*\|_{\cM}, \quad \forall r \geq 0, \ t \geq 0,
\]
where \(w^*\) is any solution of the KKT system~\eqref{eq:KKT}. Since \(w^*\) is unknown, the practical surrogate
\[
\widetilde{R}_{r,t} := \|w^{r,t} - \hat{w}^{r,t+1}\|_{\cM}
\]
is employed in defining restart rules. The following criteria are commonly adopted:
\begin{enumerate}
    \item \textbf{Sufficient decay:}
    \begin{equation}\label{eq:restart_1}
        \widetilde{R}_{r,t+1}\leq \alpha_{1}\widetilde{R}_{r,0};
    \end{equation}
    \item \textbf{Necessary decay + no local progress:}
    \begin{equation}\label{eq:restart_2}
        \widetilde{R}_{r,t+1}\leq \alpha_{2}\widetilde{R}_{r,0}, 
        \quad \text{and}\quad \widetilde{R}_{r,t+1} > \widetilde{R}_{r,t};
    \end{equation}
    \item \textbf{Long inner loop:}
    \begin{equation}\label{eq:restart_3}
        t \geq \alpha_3 k;
    \end{equation}
\end{enumerate}
where \(0<\alpha_1<\alpha_2<1\) and \(0<\alpha_3<1\). When any criterion is met, the inner loop is restarted at iteration \((r+1)\) with \(w^{r+1,0} = \bar{w}^{r,\tau_r}\) and an updated \(\sigma_{r+1}\).

\begin{remark}
Restart strategies are commonly used in first-order methods for LP~\cite{applegate2021practical,lu2023cupdlp,lu2023cupdlp_c,lu2024restarted}. For instance, PDLP adopts a normalized duality gap as the merit function~\cite{applegate2021practical}, while subsequent works by Lu et al.~\cite{lu2023cupdlp,lu2023cupdlp_c} introduced variants based on weighted KKT residuals.
\end{remark}

\subsubsection{Update rules for \texorpdfstring{$\sigma$}{Sigma}}
Another important enhancement of HPR methods concerns the update of the penalty parameter~$\sigma$. The update strategy is motivated by the complexity results of the HPR method in Algorithm \ref{alg:sp-HPR} (see Theorem~\ref{Th:complexity-acc-pADMM}). At a high level, the goal is to select $\sigma$ at each restart to tighten the complexity bound and thereby reduce the KKT residuals in subsequent iterations. Specifically, the ideal update is defined as the minimizer of the weighted distance to the optimal solution:
\begin{equation}\label{eq:sigma-0}
\sigma_{r+1} := \arg \min_{\sigma} \left\| w^{r+1,0} - w^* \right\|_{\mathcal{M}}^2,
\end{equation}
where $w^*$ is any solution of the KKT system~\eqref{eq:KKT}. Substituting the definition of $\mathcal{M}$ from~\eqref{def:M} leads to the closed-form expression
\begin{equation}\label{eq:sigma}
\sigma_{r+1} = \sqrt{ \frac{\| x^{r+1,0} - x^* \|^2}{\| y^{r+1,0} - y^* \|_{\mathcal{T}_1}^2 + \| A^*(y^{r+1,0} - y^*) \|^2} }.
\end{equation}
Since the optimal solution $(x^*,y^*)$ is unknown, practical implementations approximate these terms using the observed progress within each outer loop:
\begin{equation}\label{eq:Dx&Dy}
\Delta_x := \| \bx^{r,\tau_r} - x^{r,0} \|, 
\qquad 
\Delta_y := \sqrt{\| \by^{r,\tau_r} - y^{r,0} \|_{\mathcal{T}_1}^2 + \| A^*(\by^{r,\tau_r} - y^{r,0}) \|^2},
\end{equation}
which yields the implementable update rule
\begin{equation}\label{eq:sigma_approx}
\sigma_{r+1} = \frac{\Delta_x}{\Delta_y}.
\end{equation}

Several special cases of $\mathcal{T}_1$ have been investigated in the literature~\cite{chen2024hpr}:  
\begin{enumerate}
    \item \textbf{Case $\mathcal{T}_1 = 0$.}  
    This case occurs when $b:=l_c=u_c$, which arises in applications with special structure in $A$, such as optimal transport~\cite{zhang2025hot} and Wasserstein barycenter problem~\cite{zhang2022efficient}. The $y$-update then reduces to solving the linear system
\begin{equation}\label{eq:AATy=R}
AA^{*}\by^{r,t+1} = \frac{1}{\sigma_r}\big(b - A(\bx^{r,t+1} + \sigma_r (\bz^{r,t+1} - c))\big),
\end{equation}
which is computationally affordable in practice. In this case, the update rule~\eqref{eq:sigma_approx} simplifies to
\begin{equation}\label{eq:sigma_T1=0}
\sigma_{r+1} = \frac{\|\bx^{r,\tau_r} - x^{r,0}\|}{\|A^{*}(\by^{r,\tau_r} - y^{r,0})\|}.
\end{equation}
    \item \textbf{Case $\mathcal{T}_1 = \lambda_A I_{m} - AA^{*}$ with $\lambda_A\geq \|A\|^2$.}  
    Proposed in~\cite{esser2010general,chambolle2011first,xu2011class}, this choice applies when $l_c\neq u_c$ or when solving~\eqref{eq:AATy=R} directly is expensive. The $y$-update takes the form
    \begin{equation}\label{update-y}
    \by^{r,t+1}=\frac{1}{\sigma_r \lambda_A} \Big( \Pi_{\mathcal{K}} (R_{y}) - R_{y} \Big),
    \end{equation}
    where $R_{y}:= A(2 \bx^{r,t+1} - x^{r,t}) - \sigma_r \lambda_A y^{r,t}$. In this setting, the update for $\sigma$ becomes
    \begin{equation}\label{eq:sigma_T1!=0}
    \sigma_{r+1}=\frac{1}{\sqrt{\lambda_A}}\frac{\|\bx^{r,\tau_r} - x^{r,0}\|}{\|\by^{r,\tau_r} - y^{r,0}\|}.
    \end{equation}
\end{enumerate}

\begin{remark}
The update formula~\eqref{eq:sigma_T1!=0} is closely related to the primal weight update in PDLP~\cite[Algorithm~3]{applegate2021practical}, differing mainly by the presence of the factor $\lambda_A$.
\end{remark}

It is worth noting that the approximations $\Delta_x$ and $\Delta_y$ may deviate significantly from the true quantities. To address this, various smoothing schemes~\cite{applegate2021practical,lu2023cupdlp,lu2023cupdlp_c,chen2025hpr} and safeguards~\cite{chen2024hpr} have been proposed to stabilize the update. Moreover, the rule~\eqref{eq:sigma_approx} is not specific to LP; it extends directly to the HPR method for more general convex optimization problems~\cite{sun2025accelerating}, including CCQP~\cite{chen2025hpr}.

\section{Relationships among GPU-accelerated LP solvers}\label{sec:3}
Fig.~\ref{fig:connections} summarizes the connections among several popular FOMs for solving LP. Two relationships are of particular interest in this paper.  First, we demonstrate that the base algorithm of cuPDLPx ($r^2$HPDHG)~\cite{lu2025cupdlpx} is a special case of HPR-LP under suitable parameter choices.  Second, we examine the relationship between HPR-LP and EPR-LP, focusing on the interplay between the Halpern iteration and the ergodic iteration.  The following subsections elaborate on these two relationships in detail.

\begin{landscape}
\begin{figure}[p]
    \centering
    \includegraphics[width=1\linewidth]{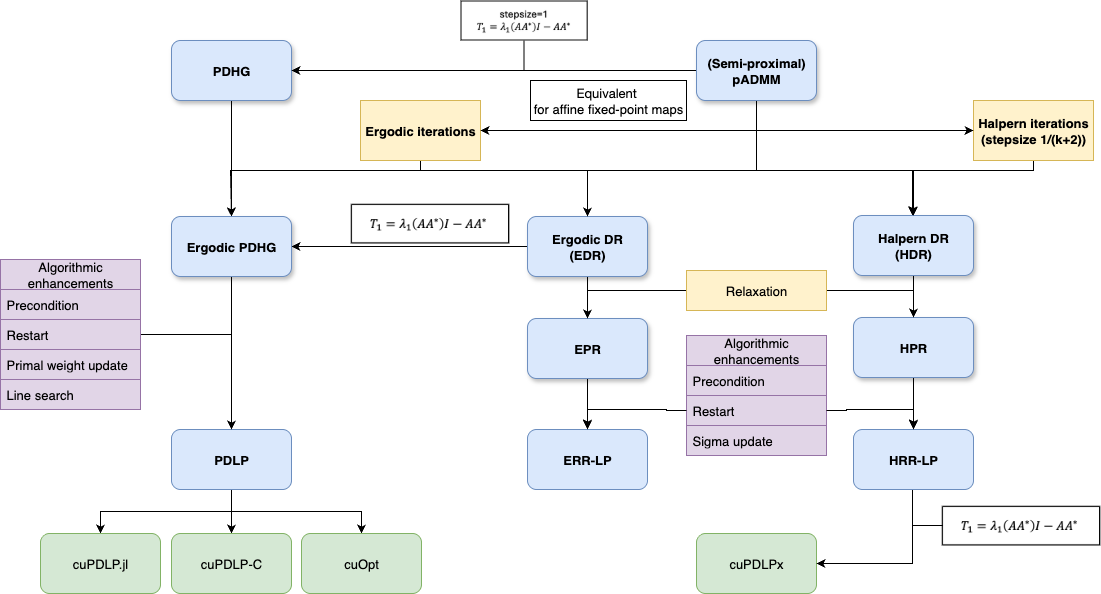} 
    \caption{Relationships among GPU-accelerated FOMs for solving LP.}
    \label{fig:connections}
\end{figure}
\end{landscape}

\subsection{The relationship between cuPDLPx and HPR-LP}
Lu et al.~\cite{lu2025cupdlpx} recently released cuPDLPx, a GPU-accelerated solver that enhances PDLP~\cite{applegate2021practical} by incorporating the Halpern iteration \cite{halpern1967fixed,lieder2021convergence}. The key observation is that the base iteration of cuPDLPx ($r^2$HPDHG) is a special case of the HPR method (Algorithm~\ref{alg:sp-HPR}). Specifically, starting from an initial point $u^0 = (y^0, x^0) \in \mathbb{R}^m \times \mathbb{R}^n$, the base iteration of cuPDLPx with reflection parameter $\gamma \in [0,1]$ is given by
\begin{equation}\label{cuPDLPx}
\begin{aligned}
\begin{cases}
    &\operatorname{PDHG}(u^k) = 
    \begin{cases}
        \bx^{k+1} = \Pi_{\mathcal{C}}\!\left(x^k - \frac{\eta}{\omega} (c - A^{*} y^k) \right), \\[6pt]
        \by^{k+1} = y^k - \eta \omega A (2\bx^{k+1} - x^k) \\[4pt]
        \quad\;\; - \eta \omega \Pi_{-\mathcal{K}}\!\left( 
            \tfrac{1}{\eta \omega} y^k - A (2\bx^{k+1} - x^k) 
        \right),
    \end{cases} \\[10pt]
    &u^{k+1} = \frac{k+1}{k+2} \Big( (1+\gamma)\, \operatorname{PDHG}(u^k) - \gamma u^k \Big) 
    + \frac{1}{k+2} u^0,
\end{cases}
\end{aligned}
\end{equation}
where $\eta$ is the stepsize and $\omega$ is the primal weight. The relationship between cuPDLPx and the HPR method is stated below.

\begin{proposition}\label{prop:equiv-cuPDLPx-HPR}
The sequence $\{(x^k,y^k)\}$ generated by cuPDLPx in~\eqref{cuPDLPx} with $\gamma=1$ coincides with the sequence produced by the HPR method (Algorithm~\ref{alg:sp-HPR}) with $\cT_1= \lambda_A I_m - AA^{*}$, provided that
\[
\sigma = \frac{\eta}{\omega}, 
\qquad 
\lambda_A = \frac{1}{\eta^2} \geq \|A\|^2,
\]
and both algorithms are initialized at the same point.
\end{proposition}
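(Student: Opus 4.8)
The plan is to verify that Algorithm~\ref{alg:sp-HPR} and cuPDLPx~\eqref{cuPDLPx} agree move-for-move on the $(y,x)$ block, organized around the three building pieces of the HPR iteration: the primal proximal step (Steps~1--2), the dual proximal step (Step~3), and the Halpern extrapolation (Steps~4--5). Throughout I would keep $\bz^{k+1}$ only as an auxiliary quantity and show that it can be eliminated from the $(y,x)$ recursion, so that the three-variable HPR iteration collapses onto the two-variable cuPDLPx iteration.

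First I would treat the primal step. Using the Step~2 identity $A^{*}y^{k}+\bz^{k+1}-c=\tfrac{1}{\sigma}(\bx^{k+1}-x^{k})$, the first-order condition for the $z$-subproblem (Step~1) collapses to $\bx^{k+1}\in\partial\delta^{*}_{\cC}(-\bz^{k+1})$, equivalently $-\bz^{k+1}\in\mathcal{N}_{\cC}(\bx^{k+1})$ by conjugate duality. Substituting the Step~2 expression for $\bz^{k+1}$ and using that $\mathcal{N}_{\cC}$ is a cone, this becomes $\big(x^{k}-\sigma(c-A^{*}y^{k})\big)-\bx^{k+1}\in\mathcal{N}_{\cC}(\bx^{k+1})$, which is exactly the optimality characterization of a Euclidean projection, giving $\bx^{k+1}=\Pi_{\cC}\!\big(x^{k}-\sigma(c-A^{*}y^{k})\big)$. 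With $\sigma=\eta/\omega$ this is precisely the primal update of cuPDLPx.

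Next I would handle the dual step. Writing the first-order condition for Step~3 and again substituting the Step~2 relation to remove $\bz^{k+1}$, the quadratic penalty contributes $\sigma AA^{*}(\by^{k+1}-y^{k})$, which cancels the $-\sigma AA^{*}(\by^{k+1}-y^{k})$ coming from $\cT_1=\lambda_A I_m-AA^{*}$ and leaves only $\sigma\lambda_A(\by^{k+1}-y^{k})$ alongside $A(2\bx^{k+1}-x^{k})$; this is exactly the explicit update~\eqref{update-y}. To match cuPDLPx I would insert $\sigma\lambda_A=1/(\eta\omega)$ (from $\sigma=\eta/\omega$ and $\lambda_A=1/\eta^{2}$) and rewrite~\eqref{update-y} via the reflection identity $\Pi_{\cK}(-s)=-\Pi_{-\cK}(s)$ applied to $s=\tfrac{1}{\eta\omega}y^{k}-A(2\bx^{k+1}-x^{k})$, recovering the dual update of~\eqref{cuPDLPx}; here the hypothesis $\lambda_A\ge\|A\|^{2}$ is what guarantees $\cT_1\succeq0$, so Step~3 is well posed.

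Finally, the Halpern block is immediate: Steps~4--5 yield $w^{k+1}=\tfrac{1}{k+2}w^{0}+\tfrac{k+1}{k+2}(2\bw^{k+1}-w^{k})$, whose $(y,x)$ components coincide with~\eqref{cuPDLPx} at $\gamma=1$ since $\operatorname{PDHG}(u^{k})$ equals $(\by^{k+1},\bx^{k+1})$. The step I expect to be the main obstacle is justifying that the $z$-component may be discarded: although Step~5 extrapolates $z$ as well, the elimination carried out above shows that Steps~1--3 of the next sweep reference $y^{k+1},x^{k+1}$ only, so $\bz^{k+2},\bx^{k+2},\by^{k+2}$ never see $z^{k+1}$. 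Hence $z$ is a spectator variable, and under identical initialization the two $(y,x)$ trajectories coincide for all $k$. The remaining care is purely in the sign bookkeeping of the conjugate/normal-cone passages and in the $\cK$-versus-$(-\cK)$ reflection identity.
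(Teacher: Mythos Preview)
Your proposal is correct and follows essentially the same approach as the paper: specialize Algorithm~\ref{alg:sp-HPR} with $\cT_1=\lambda_A I_m-AA^*$ to obtain closed-form updates for $(\bx^{k+1},\by^{k+1})$, then match parameters with~\eqref{cuPDLPx}. Your version is in fact more careful than the paper's, which simply writes down the simplified scheme~\eqref{HPR-LP-2} and asserts the match by inspection; you explicitly justify the reflection identity $\Pi_{\cK}(-s)=-\Pi_{-\cK}(s)$ needed to reconcile the two $\by$-formulas and you spell out why the extrapolated $z$-component never feeds back into subsequent $(y,x)$ updates, both of which the paper leaves implicit.
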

\begin{proof}
Taking $\mathcal{T}_1 = \lambda_A I_m - A A^{*}$ with $\lambda_A \geq \| A\|^2_2$, Algorithm \ref{alg:sp-HPR} simplifies to:
\begin{equation}\label{HPR-LP-2}
\begin{cases}
\begin{aligned}
& \bz^{k+1} = \frac{1}{\sigma} \left( \Pi_{\mathcal{C}} \left( x^k + \sigma (A^{*} y^k - c) \right) - \left( x^k + \sigma (A^{*} y^k - c) \right) \right), \\[6pt]
& \bx^{k+1} = x^k + \sigma (A^{*} y^k + \bz^{k+1} - c) 
= \Pi_{\mathcal{C}} \left( x^k + \sigma (A^{*} y^k - c) \right), \\[6pt]
& \by^{k+1} = \frac{1}{\sigma \lambda_A} \left( \Pi_{\mathcal{K}} \left( A(2 \bx^{k+1} - x^k) - \sigma \lambda_A y^k \right) 
- \left( A(2 \bx^{k+1} - x^k) - \sigma \lambda_A y^k \right) \right), \\[6pt]
& \hw^{k+1} = 2\bw^{k+1} - w^k, \\[6pt]
& w^{k+1} = \frac{1}{k+2} w^0 + \frac{k+1}{k+2} \hw^{k+1}.
\end{aligned}
\end{cases}
\end{equation}
Comparing the update schemes~\eqref{cuPDLPx} with $\gamma=1$ and~\eqref{HPR-LP-2}, we observe that the two methods produce identical iterates when the parameters satisfy the relations
\[
\frac{\eta}{\omega} = \sigma, \qquad \eta \omega = \frac{1}{\lambda_A \sigma},
\]
and both are initialized from the same point. That is, if 
\begin{equation}\label{eq:sigma&lambda_A}
\sigma=\frac{\eta}{\omega}, \qquad \lambda_A=\frac{1}{\eta^2}\geq \|A\|^2,    
\end{equation}
then the sequence $\{ u^k = (y^k, x^k) \}$ generated by~\eqref{cuPDLPx} with $\gamma = 1$ coincides with the sequence $\{ (y^k, x^k) \}$ produced by~\eqref{HPR-LP-2}.
\end{proof}

\begin{remark}
In~\cite{lu2025cupdlpx}, the stepsize is chosen as $\eta= 0.998/\|A\|$. By the above equivalence, one may instead take $\eta = 1/\|A\|$ to obtain a larger stepsize, thereby improving performance while preserving theoretical guarantees established in \cite{sun2025accelerating}. Furthermore, for $\gamma \in [0,1)$, the scheme~\eqref{cuPDLPx} can be viewed as a special case of the Halpern-accelerated pADMM with parameter $\rho = \gamma + 1$~\cite{sun2025accelerating}. 
\end{remark}

\subsection{The relationship between HPR-LP and EPR-LP}

In this subsection, we establish the connection between the base algorithms of HPR-LP and EPR-LP through the relationship between ergodic and Halpern iterations. To this end, we first recall the EPR method proposed in~\cite{chen2025peaceman} for solving the dual problem~\eqref{model:dualLP}, summarized in Algorithm~\ref{alg:sp-EPR}.
\begin{algorithm}[H]
	\caption{An EPR method for the LP problem~\eqref{model:dualLP}~\cite{chen2025peaceman}}
	\label{alg:sp-EPR}
	\begin{algorithmic}[1]
		\STATE \textbf{Input:} Set $\sigma > 0$ and choose a self-adjoint positive semidefinite operator $\mathcal{T}_1: \mathbb{R}^m \to \mathbb{R}^m$ such that $\mathcal{T}_1 + AA^{*}$ is positive definite. Let $w=(y,z,x)$, $\bw=(\by,\bz,\bx)$, and initialize $w^0 = (y^0, z^0, x^0)\in \mathbb{R}^m\times \mathbb{R}^n\times \mathbb{R}^n$.
        \FOR{$k=0,1,\ldots$}
		\STATE Step 1: $\displaystyle \bz^{k+1}=\arg \min_{z \in \mathbb{R}^{n}} L_{\sigma}(y^k, z ; x^k)$;
		\STATE Step 2: $\displaystyle \bx^{k+1}={x}^k+\sigma (A^{*}{y}^{k}+\bz^{k+1}-c)$;
		\STATE Step 3: $\displaystyle \by^{k+1}=\arg \min_{y \in \mathbb{R}^{m}}\{L_{\sigma}(y, \bz^{k+1} ; \bx^{k+1})+\tfrac{\sigma}{2}\|y-y^{k}\|_{\mathcal{T}_1}^2\}$;
		\STATE Step 4: $\displaystyle w^{k+1}= 2\bw^{k+1}-{w}^{k}$;
        \ENDFOR
        \STATE \textbf{Output:} $\{\bw_a^{k+1}=\tfrac{1}{k+2}\sum_{j=0}^{k+1}\bw^{j+1}\}$ and  $\{w^{k+1}_{a}=\tfrac{1}{k+2}\sum_{j=0}^{k+1}w^{j+1}\}$.
	\end{algorithmic}
\end{algorithm}
\begin{remark}
Chen et al.~\cite{chen2025peaceman} proved that the ergodic sequence $\{\bw_a^{k+1}\}$ generated by the EPR method converges for convex optimization problems with linear constraints. They also provided an example showing that the ergodic sequence $\{w^{k+1}_{a}\}$ fails to converge while the sequence $\{w^k\}$ is unbounded. 
\end{remark}

We now formalize the identity that the Halpern iteration with stepsize $1/(k+2)$ coincides with uniform ergodic averages of Picard iterates when the underlying mapping is affine. This observation goes back to the analysis of Halpern iteration~\cite{wittmann1992approximation} and has been made explicit in, e.g.,~\cite{kornlein2015quantitative}.

\begin{proposition}\label{prop:ExH-affine}
Let $\mathcal{F}:\mathbb{W}\to\mathbb{W}$ be an affine map of the form
\[
\mathcal{F}(w) \;=\; Rw + b,
\]
where $R:\mathbb{W}\to\mathbb{W}$ is linear and $b\in\mathbb{W}$ is fixed. Given $w^{0}\in\mathbb{W}$, consider the Halpern iteration
\[
    w^{k+1} \;=\; \frac{1}{k+2}\, w^{0} \;+\; \frac{k+1}{k+2}\, \mathcal{F}( w^{k})
    \qquad \forall k \ge 0.
\]
Then, for every $k\ge 0$,
\[
    w^{k+1} \;=\; \frac{1}{k+2}\sum_{j=0}^{k+1} \mathcal{F}^{\,j}(w^{0}),
\]
i.e., $w^{k+1}$ equals the ergodic average of the affine Picard iterates $\{\mathcal{F}^{\,j}(w^{0})\}_{j=0}^{k+1}$.
\end{proposition}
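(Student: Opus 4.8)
The plan is to argue by induction on $k$, exploiting the single structural feature that separates an affine map from a general one: $\mathcal{F}$ commutes with any weighted average whose weights sum to one. First I would record this observation precisely. If $\sum_i \lambda_i = 1$, then since $\mathcal{F}(w) = Rw + b$ with $R$ linear,
\[
\mathcal{F}\!\Big(\sum_i \lambda_i w_i\Big) = R\sum_i \lambda_i w_i + b = \sum_i \lambda_i (R w_i + b) = \sum_i \lambda_i \mathcal{F}(w_i),
\]
where the middle equality uses $\sum_i \lambda_i = 1$ to distribute the constant term $b$ across the sum. This is exactly the identity that fails for nonlinear $\mathcal{F}$, and it is the crux of the whole argument.

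For the base case $k=0$, the Halpern update gives $w^{1} = \tfrac{1}{2} w^{0} + \tfrac{1}{2}\mathcal{F}(w^{0}) = \tfrac{1}{2}\sum_{j=0}^{1}\mathcal{F}^{\,j}(w^{0})$, with the convention $\mathcal{F}^{0} = \mathrm{Id}$, matching the claimed formula. For the inductive step I would assume $w^{k} = \tfrac{1}{k+1}\sum_{j=0}^{k}\mathcal{F}^{\,j}(w^{0})$. The coefficients here are $k+1$ copies of $\tfrac{1}{k+1}$, which sum to one, so the commutation identity applies and yields
\[
\mathcal{F}(w^{k}) = \frac{1}{k+1}\sum_{j=0}^{k}\mathcal{F}^{\,j+1}(w^{0}) = \frac{1}{k+1}\sum_{j=1}^{k+1}\mathcal{F}^{\,j}(w^{0}).
\]
Substituting into the Halpern recursion $w^{k+1} = \tfrac{1}{k+2} w^{0} + \tfrac{k+1}{k+2}\mathcal{F}(w^{k})$, the factor $\tfrac{k+1}{k+2}$ cancels the $\tfrac{1}{k+1}$, and after reinstating $w^{0} = \mathcal{F}^{0}(w^{0})$ as the missing $j=0$ term, the two pieces merge into $\tfrac{1}{k+2}\sum_{j=0}^{k+1}\mathcal{F}^{\,j}(w^{0})$, which is the desired formula at index $k+1$. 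This closes the induction.

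I expect no serious obstacle beyond the affinity step; everything else is bookkeeping of the averaging weights and a telescoping of the index shift $j \mapsto j+1$. The one point that must be stated with care is precisely \emph{why} $\mathcal{F}$ passes through the average, namely that the weights appearing in the inductive hypothesis sum to one. This is where the assumption that $\mathcal{F}$ is affine is genuinely and unavoidably used. I would also flag that the specific Halpern stepsize $1/(k+2)$ is exactly what makes the coefficients line up to produce \emph{uniform} ergodic averages; a different stepsize schedule would instead yield a nonuniformly weighted average of the Picard iterates.
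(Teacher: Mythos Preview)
Your proof is correct and follows exactly the same approach as the paper's: induction on $k$, with the inductive step using the affinity of $\mathcal{F}$ to pass it through the convex combination in the hypothesis, followed by the cancellation of $\tfrac{k+1}{k+2}\cdot\tfrac{1}{k+1}$ and reabsorption of the $j=0$ term. Your additional remarks on why affinity is essential and why the stepsize $1/(k+2)$ produces uniform averages are accurate but go slightly beyond what the paper records.
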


\begin{proof}
Let $p_j := \mathcal{F}^{\,j}(w^{0})$ for $j\ge 0$. We prove by induction on $k$ that
\begin{equation}\label{eq:ExH-aff-avg}
w^{k+1} \;=\; \frac{1}{k+2}\sum_{j=0}^{k+1} p_j .
\end{equation}
For case  $k=0$, we have $w^{1}=\tfrac12 w^{0}+\tfrac12 \mathcal{F}(w^{0})=\tfrac12(p_0+p_1)$, which is \eqref{eq:ExH-aff-avg}. Assume \eqref{eq:ExH-aff-avg} holds for some $k\ge 0$, i.e.,
\[
w^{k} \;=\; \frac{1}{k+1}\sum_{j=0}^{k} p_j .
\]
Using the Halpern update and the affinity of $\mathcal{F}$ on convex combinations,
\[
\begin{aligned}
w^{k+1}
&= \frac{1}{k+2} w^{0} + \frac{k+1}{k+2}\, \mathcal{F}( w^{k})
= \frac{1}{k+2} p_0 + \frac{k+1}{k+2}\,\mathcal{F}\!\Big(\frac{1}{k+1}\sum_{j=0}^{k} p_j\Big) \\
&= \frac{1}{k+2} p_0 + \frac{k+1}{k+2}\,\Big(\frac{1}{k+1}\sum_{j=0}^{k} \mathcal{F}(p_j)\Big)
= \frac{1}{k+2}\Big( p_0 + \sum_{j=0}^{k} p_{j+1}\Big)
= \frac{1}{k+2}\sum_{j=0}^{k+1} p_j ,
\end{aligned}
\]
which is \eqref{eq:ExH-aff-avg}. Hence, the identity holds for all $k\geq 0$.
\end{proof}

We next state a sufficient condition under which the PR updates for LP in Algorithm \ref{alg:sp-HPR} reduce to an affine fixed-point map. We adopt the following strict complementarity notion for a KKT point $(x^*,y^*,z^*)$: for each $i$,
\begin{equation}\label{eq:SC-1}
    x^*_i\in\{(l_v)_i,(u_v)_i\}\ \Rightarrow\ |z^*_i|>0,
\qquad
x^*_i\in((l_v)_i,(u_v)_i)\ \Rightarrow\ z^*_i=0,
\end{equation}
and for each $j$,
\begin{equation}\label{eq:SC-2}
(Ax^*)_j\in\{(l_c)_j,(u_c)_j\}\ \Rightarrow\ |y^*_j|>0,
\qquad
(Ax^*)_j\in((l_c)_j,(u_c)_j)\ \Rightarrow\ y^*_j=0.
\end{equation}
The associated optimal active index sets are defined as
\[
I_C^*:=\{\,i:\ x^*_i\in\{(l_v)_i,(u_v)_i\}\,\},\qquad
I_K^*:=\{\,j:\ (Ax^*)_j\in\{(l_c)_j,(u_c)_j\}\,\}.
\]
In the $\bx$–update of Algorithm~\ref{alg:sp-HPR}, we set for any $k\geq 0$,
\begin{equation}\label{def:xi}
   \xi^k:=x^k+\sigma(A^*y^k-c), 
\qquad
\bx^{k+1}=\Pi_{\mathcal{C}}(\xi^k), 
\end{equation}
and define the projection–active indices
\[
I_C^k:=\{\,i:\ [\Pi_{\mathcal{C}}(\xi^k)]_i\in\{(l_v)_i,(u_v)_i\}\,\} \quad \forall k \geq 0.
\]
Similarly, in the $\by$–update of Algorithm~\ref{alg:sp-HPR} with $\cT_1=\lambda_A I_m-AA^*$ and $\lambda_A\geq \|A\|^2$, for any $k\geq 0$, we let 
\begin{equation}\label{def:zeta}
\zeta^k:=A(2\bx^{k+1}-x^k)-\sigma\lambda_A y^k, 
\qquad
\by^{k+1}=\tfrac{1}{\sigma\lambda_A}\bigl(\Pi_{\mathcal{K}}(\zeta^k)-\zeta^k\bigr),
\end{equation}
and define
\[
I_K^k:=\{\,j:\ [\Pi_{\mathcal{K}}(\zeta^k)]_j\in\{(l_c)_j,(u_c)_j\}\,\} \quad \forall k\geq 0.
\]

We now show that under the strict complementarity condition, the active sets can be identified in finitely many steps, after which the PR map reduces to an affine operator on the corresponding face.

\begin{proposition}[Finite identification of projection-active sets]\label{thm:act-stab-bar}  
Suppose Assumption~\ref{ass:CQ} holds. Let $\cT_1=\lambda_A I_m-AA^*$ with $\lambda_A\geq \|A\|^2$.  
Assume the sequence $\{(\bx^k,\by^k,\bz^k)\}$ generated by Algorithm~\ref{alg:sp-HPR} converges to a solution $(x^*,y^*,z^*)$ satisfying strict complementarity conditions in \eqref{eq:SC-1} and \eqref{eq:SC-2}. Then there exists $K<\infty$ such that for all $k\ge K$,  
\[
I_C^k=I_C^*, \qquad I_K^k=I_K^*.
\]  
\end{proposition}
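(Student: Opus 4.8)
The plan is to show that the two projection arguments $\xi^k$ and $\zeta^k$ converge to limits $\xi^*,\zeta^*$ whose coordinates are \emph{strictly} separated from the box breakpoints, and then to exploit the coordinatewise, piecewise-linear structure of $\Pi_{\cC}$ and $\Pi_{\cK}$. Recall that for a box, $[\Pi_{\cC}(\xi)]_i$ attains a finite bound exactly when $\xi_i$ lies (weakly) outside the open interval $((l_v)_i,(u_v)_i)$. Hence, once $\xi^k_i$ is trapped strictly inside or strictly outside that interval, the $i$-th activity is frozen; since there are finitely many coordinates, a single $K$ serves all of them. The whole argument thus reduces to (a) identifying $\xi^*,\zeta^*$ and (b) checking strict separation via strict complementarity.

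First I would rewrite $\xi^k$ in terms of the averaged iterates. From the $\bz$- and $\bx$-updates one has $\bx^{k+1}=\xi^k+\sigma\bz^{k+1}$, so $\xi^k=\bx^{k+1}-\sigma\bz^{k+1}\to x^*-\sigma z^*=:\xi^*$, using only the assumed convergence of $\{(\bx^k,\bz^k)\}$. The KKT relation $0\in x^*-\partial\delta^*_{\cC}(-z^*)$ is equivalent to $-z^*\in\mathcal{N}_{\cC}(x^*)$, i.e. $\Pi_{\cC}(\xi^*)=x^*$. Reading this coordinatewise: if $i\notin I_C^*$ then $z^*_i=0$ by \eqref{eq:SC-1}, so $\xi^*_i=x^*_i\in((l_v)_i,(u_v)_i)$; if $i\in I_C^*$ with $x^*_i=(l_v)_i$, then $-z^*_i\le 0$ forces $z^*_i\ge 0$, strict complementarity gives $z^*_i>0$, and hence $\xi^*_i=(l_v)_i-\sigma z^*_i<(l_v)_i$ (the case $x^*_i=(u_v)_i$ is symmetric, giving $\xi^*_i>(u_v)_i$). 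Thus $\xi^*$ is strictly interior for inactive indices and strictly exterior for active ones, and $\xi^k\to\xi^*$ yields $I_C^k=I_C^*$ for all large $k$.

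The same program for $\zeta^*:=Ax^*-\sigma\lambda_A y^*$ is more delicate, and I expect this to be the crux. The difficulty is that $\zeta^k=A(2\bx^{k+1}-x^k)-\sigma\lambda_A y^k$ involves the \emph{non-averaged} iterates $x^k,y^k$, which need not converge, and (when $\lambda_A=\|A\|^2$) cannot be controlled componentwise by the $\cM$-seminorm. The key observation is that only the combination $Ax^k+\sigma\lambda_A y^k$ enters $\zeta^k$, and this combination \emph{is} controlled. With $\cT_1=\lambda_A I_m-AA^*$, the operator $\cM$ in \eqref{def:M} has blocks $\sigma\lambda_A I_m$, $A$, $A^*$, $\tfrac1\sigma I_n$, and completing the square gives, for $\Delta y:=\by^{k+1}-y^k$ and $\Delta x:=\bx^{k+1}-x^k$,
\[
\|\bw^{k+1}-w^k\|_{\cM}^2=\big\|\sqrt{\sigma\lambda_A}\,\Delta y+\tfrac{1}{\sqrt{\sigma\lambda_A}}A\Delta x\big\|^2+\tfrac1\sigma\big\langle\Delta x,(I_n-\tfrac{1}{\lambda_A}A^*A)\Delta x\big\rangle,
\]
a sum of two nonnegative terms because $\lambda_A\ge\|A\|^2$. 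By \eqref{eq:complexity-bound-M} the left side tends to $0$, so the first term does too, and scaling by $\sqrt{\sigma\lambda_A}$ yields $\sigma\lambda_A\Delta y+A\Delta x\to 0$. Therefore $Ax^k+\sigma\lambda_A y^k=A\bx^{k+1}+\sigma\lambda_A\by^{k+1}-(A\Delta x+\sigma\lambda_A\Delta y)\to Ax^*+\sigma\lambda_A y^*$, whence $\zeta^k=2A\bx^{k+1}-(Ax^k+\sigma\lambda_A y^k)\to Ax^*-\sigma\lambda_A y^*=\zeta^*$.

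With $\zeta^k\to\zeta^*$ established, I would repeat the coordinatewise analysis on $\cK$: the relation $0\in Ax^*-\partial\delta^*_{\cK}(-y^*)$ gives $-y^*\in\mathcal{N}_{\cK}(Ax^*)$, so by \eqref{eq:SC-2} one has $\zeta^*_j=(Ax^*)_j\in((l_c)_j,(u_c)_j)$ for $j\notin I_K^*$, while $\zeta^*_j$ lies strictly beyond the active bound for $j\in I_K^*$, with the sign of $y^*_j$ pinned down by strict complementarity exactly as before. Continuity of the coordinatewise projection then freezes the pattern, giving $K_2$ with $I_K^k=I_K^*$ for $k\ge K_2$; taking $K=\max\{K_1,K_2\}$ finishes the proof. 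The only genuinely nontrivial step is the limit of $\zeta^k$ just above; everything else is continuity of median-type box projections at points of strict separation.
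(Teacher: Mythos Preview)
Your proposal is correct and follows essentially the same route as the paper: show $\xi^k\to\xi^*=x^*-\sigma z^*$ from convergence of the barred iterates, show $\zeta^k\to\zeta^*=Ax^*-\sigma\lambda_A y^*$ by using the $O(1/k)$ bound \eqref{eq:complexity-bound-M} on $\|\bw^{k+1}-w^k\|_{\cM}$ to control the non-averaged part, and then invoke strict complementarity for strict separation at the box breakpoints. The only cosmetic difference is in how the $\cM$-seminorm is split: the paper completes the square as $\sigma\|\Delta y\|_{\cT_1}^2+\|\sqrt{\sigma}A^*\Delta y+\tfrac{1}{\sqrt{\sigma}}\Delta x\|^2$ to deduce $(\lambda_A I_m-AA^*)y^k\to(\lambda_A I_m-AA^*)y^*$, whereas you complete it as $\|\sqrt{\sigma\lambda_A}\Delta y+\tfrac{1}{\sqrt{\sigma\lambda_A}}A\Delta x\|^2+\tfrac{1}{\sigma}\langle\Delta x,(I_n-\tfrac{1}{\lambda_A}A^*A)\Delta x\rangle$ to deduce $Ax^k+\sigma\lambda_A y^k\to Ax^*+\sigma\lambda_A y^*$; both extractions are valid and lead to the same limit $\zeta^k\to\zeta^*$.
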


\begin{proof}
\textit{Step 1 (margins).}
Strict complementarity ensures separation from degeneracy, which implies
\[
\gamma_C:=\min_{i\in I_C^*}|z^*_i|,\quad
\gamma_K:=\min_{j\in I_K^*}|y^*_j|,\quad
\gamma:=\min\{\gamma_C,\gamma_K\}>0.
\]
For interior components, define
\[
\begin{cases}
   \beta_C:=\min_{i\notin I_C^*}\min\{\,x_i^*-(l_v)_i,\ (u_v)_i-x_i^*\,\}>0,\\
\beta_K:=\min_{j\notin I_K^*}\min\{\, (Ax^*)_j-(l_c)_j,\ (u_c)_j-(Ax^*)_j\,\}>0. 
\end{cases}
\]

\textit{Step 2 (identification for $\Pi_{\mathcal{C}}$).}
Using $A^*y^*+z^*=c$, set
\[
\xi^*:=x^*+\sigma(A^*y^*-c)=x^*-\sigma z^*.
\]
From \eqref{def:xi} and Algorithm \ref{alg:sp-HPR}, we have  $\xi^k=\bx^{k+1}-\sigma\bz^{k+1}\to \xi^*$.  
If $i \in I_C^*$, then:
- if $x_i^*=(l_v)_i$ with $z_i^*\geq \gamma$, we have $\xi_i^*\leq(l_v)_i-\sigma \gamma$, hence $[\Pi_{\mathcal{C}}(\xi^*)]_i=(l_v)_i$;  
- if $x_i^*=(u_v)_i$ with $z_i^*\leq -\gamma$, we have $\xi_i^*\geq(u_v)_i+\sigma\gamma$, hence $[\Pi_{\mathcal{C}}(\xi^*)]_i=(u_v)_i$.  
By continuity of $\Pi_{\mathcal{C}}$, there exists $K_{1,a}<\infty$ such that for all $k \ge K_{1,a}$, $[\Pi_{\mathcal{C}}(\xi^k)]_i$ remains at the corresponding bound, i.e., $i \in I_C^k$.  If $i \notin I_C^*$, then $x_i^*$ lies at least $\beta_C$ away from the bounds, so $[\Pi_{\mathcal{C}}(\xi^*)]_i=x_i^*$ is strictly interior. Since $\xi^k \to \xi^*$, there exists $K_{1,b}<\infty$ such that for all $k \ge K_{1,b}$, $[\Pi_{\mathcal{C}}(\xi^k)]_i$ also stays strictly interior, i.e., $i \notin I_C^k$. Taking $K_1=\max\{K_{1,a},K_{1,b}\}$ gives $I_C^k=I_C^*$ for all $k \ge K_1$.

\textit{Step 3 (identification for $\Pi_{\mathcal{K}}$).}  
Define $\zeta^*:=Ax^*-\sigma\lambda_A y^*$. From \eqref{def:zeta} and Algorithm~\ref{alg:sp-HPR}, we have  
\[
\zeta^k=A\bx^{k+1}+\sigma A\bz^{k+1}+\sigma(AA^*-\lambda_A I_{m})y^k-\sigma Ac \quad \forall k\ge 0.
\]
Since $(\bx^{k+1},\bz^{k+1})\to(x^*,z^*)$ and $\by^k\to y^*$, and Theorem~\ref{Th:complexity-acc-pADMM} ensures that $\|\by^{k+1}-y^k\|_{\lambda_A I_m - AA^*}\to 0$, it follows that $(AA^*-\lambda_A I_m)y^k \to (AA^*-\lambda_A I_m)y^*$ and hence $\zeta^k \to \zeta^*$. If $j\in I_K^*$, then $(Ax^*)_j=(l_c)_j$ with $y_j^*\geq\gamma$ implies $\zeta_j^*\leq(l_c)_j-\sigma\lambda_A\gamma$ and thus $[\Pi_{\mathcal{K}}(\zeta^*)]_j=(l_c)_j$; the upper-bound case is analogous. By continuity of $\Pi_{\cK}$, there exists $K_{2,a}<\infty$ such that $j\in I_K^k$ for all $k\ge K_{2,a}$. If $j\notin I_K^*$, then $(Ax^*)_j$ lies at least $\beta_K$ away from the bounds, so $[\Pi_{\mathcal{K}}(\zeta^*)]_j=(Ax^*)_j$ is interior; since $\zeta^k\to\zeta^*$, there exists $K_{2,b}<\infty$ such that $j\notin I_K^k$ for all $k\ge K_{2,b}$. Taking $K_2=\max\{K_{2,a},K_{2,b}\}$ yields $I_K^k=I_K^*$ for all $k\ge K_2$.

\textit{Step 4 (conclusion).}
Taking $K=\max\{K_1,K_2\}$ completes the proof.
\end{proof}

The next corollary shows that after active sets have been identified, the base algorithms of HPR-LP (Algorithm~\ref{alg:sp-HPR}) and EPR-LP (Algorithm~\ref{alg:sp-EPR}) coincide.
\begin{corollary}[HPR--EPR equivalence under fixed active sets]\label{cor:hpr-epr-equivalence}
Suppose Assumption~\ref{ass:CQ} holds and let $\cT_1=\lambda_A I_m-AA^*$ with $\lambda_A\geq \|A\|^2$.  
Assume that $I_C^k=I_C^*$ and $I_K^k=I_K^*$ for all $k\ge 0$.  
Suppose the HPR method (Algorithm~\ref{alg:sp-HPR}) and the EPR method (Algorithm~\ref{alg:sp-EPR}) use the same parameters $(\sigma,\lambda_A)$ and are initialized at the same point $w^0$. Then the sequence $\{w^{k+1}\}$ generated by the HPR method coincides with the ergodic sequence $\{w_a^{k+1}\}$ generated by the EPR method.
\end{corollary}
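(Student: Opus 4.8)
The plan is to exhibit a single affine operator that generates both algorithms and then invoke Proposition~\ref{prop:ExH-affine}. Let $\mathcal{F}:\mathbb{W}\to\mathbb{W}$ denote the Peaceman--Rachford map common to Steps~1--4 of Algorithms~\ref{alg:sp-HPR} and~\ref{alg:sp-EPR}: Steps~1--3 send $w^k\mapsto\bw^{k+1}$, and the reflection $\mathcal{F}(w^k):=2\bw^{k+1}-w^k$ is exactly the quantity $\hw^{k+1}$ in Step~4 of Algorithm~\ref{alg:sp-HPR} and the next iterate produced in Step~4 of Algorithm~\ref{alg:sp-EPR}. With this notation, Step~5 of Algorithm~\ref{alg:sp-HPR} reads $w^{k+1}=\frac{1}{k+2}w^0+\frac{k+1}{k+2}\mathcal{F}(w^k)$, i.e.\ the Halpern iteration of $\mathcal{F}$, whereas Algorithm~\ref{alg:sp-EPR} iterates $w^{k+1}=\mathcal{F}(w^k)$, i.e.\ the Picard iteration of $\mathcal{F}$; since both are launched from the same $w^0$, the EPR iterates are precisely $\{\mathcal{F}^j(w^0)\}_{j\ge 0}$.

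The first and central step is to show that, under the hypothesis $I_C^k=I_C^*$ and $I_K^k=I_K^*$, the map $\mathcal{F}$ agrees with a global affine operator $w\mapsto Rw+b$ along the relevant iterates. Because $\cT_1=\lambda_A I_m-AA^*$, the closed forms in \eqref{def:xi} and \eqref{def:zeta} express $\bx^{k+1},\bz^{k+1},\by^{k+1}$ through the box projections $\Pi_{\mathcal{C}}(\xi^k)$ and $\Pi_{\mathcal{K}}(\zeta^k)$ of affine arguments. The key observation is that a Euclidean projection onto a box, once the active set is fixed, is an affine face map: active coordinates are frozen at their bounds and inactive coordinates pass through unchanged. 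Holding $I_C^*$ and $I_K^*$ fixed therefore turns $\Pi_{\mathcal{C}}$ and $\Pi_{\mathcal{K}}$ into affine maps, and composing these with the affine Steps~1--3 and the affine reflection yields an affine $\mathcal{F}$.

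Granting affineness, Proposition~\ref{prop:ExH-affine} applies verbatim to the HPR recursion and gives $w^{k+1}=\frac{1}{k+2}\sum_{j=0}^{k+1}\mathcal{F}^j(w^0)$ for every $k\ge 0$. Since the EPR Picard iterates are exactly $\mathcal{F}^j(w^0)$ and both methods share $w^0$ and $(\sigma,\lambda_A)$, the right-hand side is the ergodic average output by Algorithm~\ref{alg:sp-EPR}; matching the two averaging windows term by term then yields $w^{k+1}=w_a^{k+1}$, which is the claim.

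The main obstacle I anticipate lies in making the affineness argument airtight enough to invoke Proposition~\ref{prop:ExH-affine}, whose proof uses affinity of $\mathcal{F}$ on \emph{convex combinations} of the Picard iterates rather than merely pointwise along a single trajectory. Concretely, one must verify that the region of $w$ on which the box-projection active pattern equals $(I_C^*,I_K^*)$ is convex, and that the pre-projection quantities $\xi$ and $\zeta$ of the Picard iterates $\mathcal{F}^j(w^0)$ lie in it; convexity of this region together with the affine dependence of $\xi,\zeta$ on $w$ then places every convex combination $\frac{1}{k+1}\sum_{j=0}^{k}\mathcal{F}^j(w^0)$---i.e.\ every Halpern average---in the same region, so that $\Pi_{\mathcal{C}}$ and $\Pi_{\mathcal{K}}$ act as one and the same affine face map throughout. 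A secondary, purely clerical point is the index bookkeeping in the final step: one must confirm that the window $\{\mathcal{F}^j(w^0)\}_{j=0}^{k+1}$ delivered by Proposition~\ref{prop:ExH-affine} matches the averaging window in the EPR output, with no off-by-one shift between the anchor term and the terminal Picard iterate.
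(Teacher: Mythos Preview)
Your proposal is correct and follows exactly the paper's route: fixed active sets make the box projections $\Pi_{\mathcal{C}}$ and $\Pi_{\mathcal{K}}$ affine on their faces, hence the one-step PR map $\mathcal{F}$ is affine, and then Proposition~\ref{prop:ExH-affine} is invoked. The paper's three-sentence proof glosses over both subtleties you flag (affinity on the convex combinations appearing in the induction, and the index alignment between the Halpern average $\tfrac{1}{k+2}\sum_{j=0}^{k+1}\mathcal{F}^j(w^0)$ and the EPR output $w_a^{k+1}$), so your caution is well-placed but does not constitute a different approach.
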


\begin{proof}
Since $I_C^k=I_C^*$ and $I_K^k=I_K^*$ for all $k\ge 0$, the projections $\Pi_{\cC}$ and $\Pi_{\cK}$ act on fixed faces.  
Each projection is affine on these faces, so the one-step PR map $w^k \mapsto \hat{w}^{\,k+1}$ in Algorithm~\ref{alg:sp-HPR} is affine.  
By Proposition~\ref{prop:ExH-affine}, the Halpern iteration (HPR) coincides with the ergodic average of the Picard iterates (EPR), hence the sequences coincide. 
\end{proof}

\begin{remark}
Proposition~\ref{thm:act-stab-bar} shows that under the strict complementarity condition, there exists $K<\infty$ such that $I_C^k=I_C^*$ and $I_K^k=I_K^*$ for all $k \ge K$.  If both the HPR method (Algorithm~\ref{alg:sp-HPR}) and the EPR method (Algorithm~\ref{alg:sp-EPR}) are initialized at $\bw^K$ and this initialization does not change the active sets $(I_C^*,I_K^*)$, then from that point onward the iterates evolve on the fixed affine face determined by these sets.  Thus, the assumption on active sets in Corollary~\ref{cor:hpr-epr-equivalence} can be realized after finitely many iterations by shifting the time index, which justifies the equivalence result in practice.  
\end{remark}

\begin{remark}
A related observation was made by Lu et al.~\cite{lu2024restarted}, who proved that in unconstrained bilinear problems the ergodic sequence of PDHG is identical to that of its Halpern-accelerated variant.  
\end{remark}

\section{Numerical experiments}\label{sec:4}
In this section, we present extensive numerical results on standard LP benchmark datasets. We begin with evaluating the impact of individual algorithmic components, followed by comparisons with state-of-the-art GPU-accelerated solvers.  

\subsection{Experimental setup}
\paragraph{Computing environment}  
All solvers are benchmarked on a SuperServer SYS-420GP-TNR equipped with an NVIDIA A100-SXM4-80GB GPU, an Intel Xeon Platinum 8338C CPU @ 2.60 GHz, and 256 GB RAM.

\paragraph{Datasets}  
We evaluate the solvers on two standard LP benchmark datasets.  
(i) The Mittelmann LP benchmark\footnote{\url{https://plato.asu.edu/ftp/lpfeas.html}}, which is widely used for testing both commercial and open-source LP solvers.  
(ii) A subset of 18 large-scale instances from MIPLIB 2017 \cite{gleixner2021miplib}, where the number of nonzeros in $A$ exceeds $10^7$, and we solve their LP relaxations.

\paragraph{Shifted geometric mean}
To evaluate solver performance across instances, we report the shifted geometric mean (SGM) with shift $\Delta = 10$, as in the Mittelmann benchmarks.  For a shift \(\Delta = 10\), the SGM10 is defined as
\(
\left( \prod_{i=1}^n (t_i + \Delta) \right)^{1/n} - \Delta,
\)
where \(t_i\) denotes the solve time in seconds for the \(i\)-th instance. Unsolved instances are assigned a time limit. 

\paragraph{Termination criteria} In HPR-LP, the sequence $\{\bw^{r,t}\}$ is used to check the stopping criteria. We terminate HPR-LP when the following stopping criteria are satisfied for the tolerance \(\varepsilon \in (0,\infty)\):
\begin{equation}\label{eq:stop-criteria}
 \begin{aligned}
&\left| -\delta^{*}_{\cK}(-y) -\delta^{*}_{\cC}(-z) - \langle c, x \rangle \right| \leq \varepsilon \left( 1 + \left|\delta^{*}_{\cK}(-y) + \delta^{*}_{\cC}(-z) \right| + \left| \langle c, x \rangle \right| \right), \\
&\|Ax -\Pi_{\cK}(Ax)\| \leq \varepsilon \left( 1 + \| \bar{b} \| \right), \\
&\left\| c - A^{*} y - z \right\| \leq \varepsilon \left( 1 + \| c \| \right),
\end{aligned}   
\end{equation}
where \(\bar{b} := \max(|l_c|, |u_c|)\) is taken componentwise, treating any infinite entries in \(l_c\) or \(u_c\) as zero when computing the norm. Other solvers are run with their default stopping rules, which are comparable to \eqref{eq:stop-criteria}. In Subsection~\ref{subsec:impact}, we refer to the relative KKT residual and duality gap in \eqref{eq:stop-criteria} collectively as the \emph{Optimality} to assess the effect of algorithmic enhancements.

\subsection{Impact of algorithmic enhancements}\label{subsec:impact}  
We evaluate the impact of adaptive restarts, penalty parameter updates, and the relaxation step on two representative Mittelmann LP instances (\texttt{datt256} and \texttt{nug08-3rd}).  We consider these components incrementally: starting from the baseline PR method (Algorithm~\ref{alg:sp-HPR} without the Halpern step), then HPR (Algorithm~\ref{alg:sp-HPR}), HPR with adaptive restarts, HPR with adaptive restarts plus adaptive penalties (Algorithm~\ref{alg:HPR_LP}), and finally an HDR variant (obtained by replacing Step~4 in Algorithm~\ref{alg:sp-HPR} with $\hw^{k+1} = \bw^{k+1}$), combined with the similar restart and adaptive penalty updates as in HPR-LP (Algorithm~\ref{alg:HPR_LP}). The results are reported in Fig.~\ref{fig:summary}, and the main observations are as follows:  
\begin{enumerate}
    \item The plain PR method fails to converge. In contrast, HPR without restart converges, albeit slowly. Incorporating adaptive restarts upgrades the rate to linear convergence, and adaptive penalty updates further accelerate convergence.  
      \item When both restart and adaptive penalty updates are employed, HPR consistently outperforms HDR, underscoring the critical role of the relaxation step (Step~4 in Algorithm~\ref{alg:sp-HPR}) in enhancing performance.  
\end{enumerate}

\begin{figure}[H]
\centering
\begin{subfigure}{0.48\textwidth}
    \includegraphics[width=\textwidth,height=4.5cm]{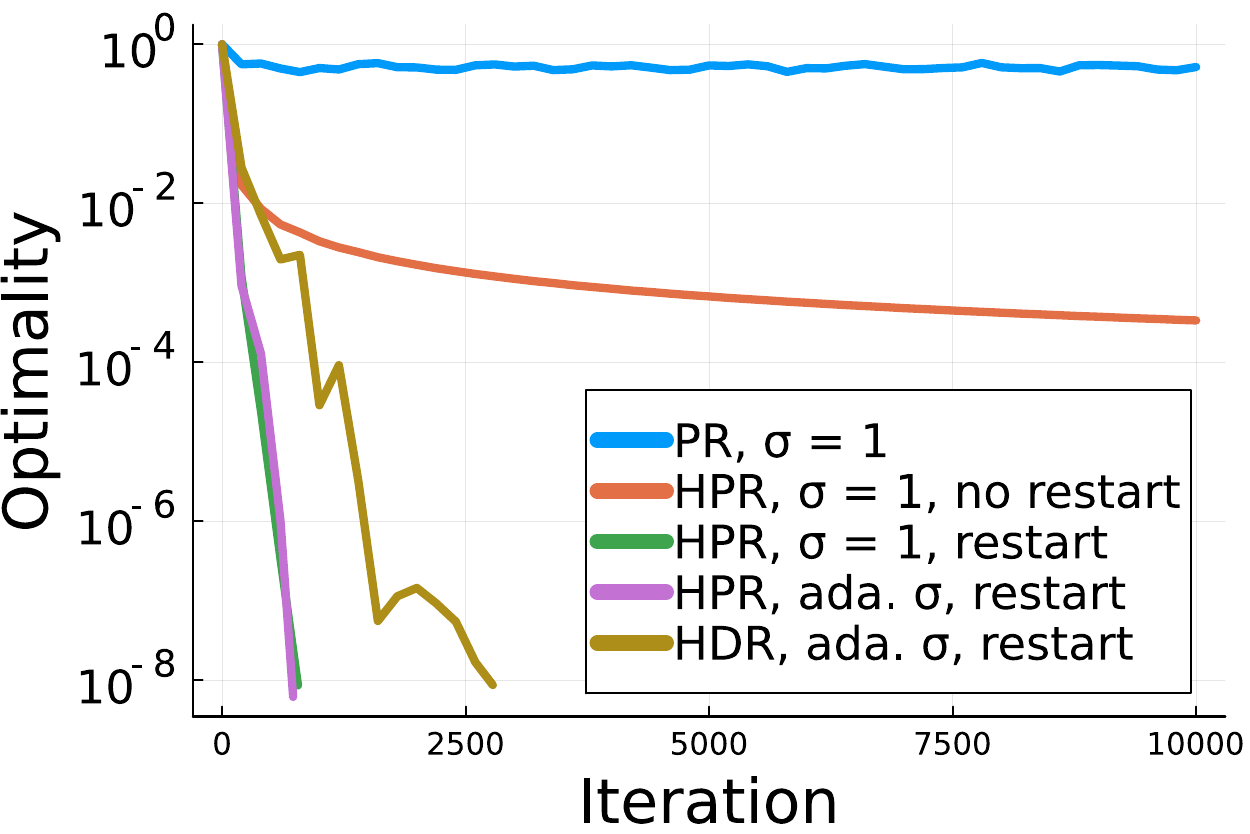}
    \caption{\texttt{datt256}}
\end{subfigure}\hfill
\begin{subfigure}{0.48\textwidth}
    \includegraphics[width=\textwidth,height=4.5cm]{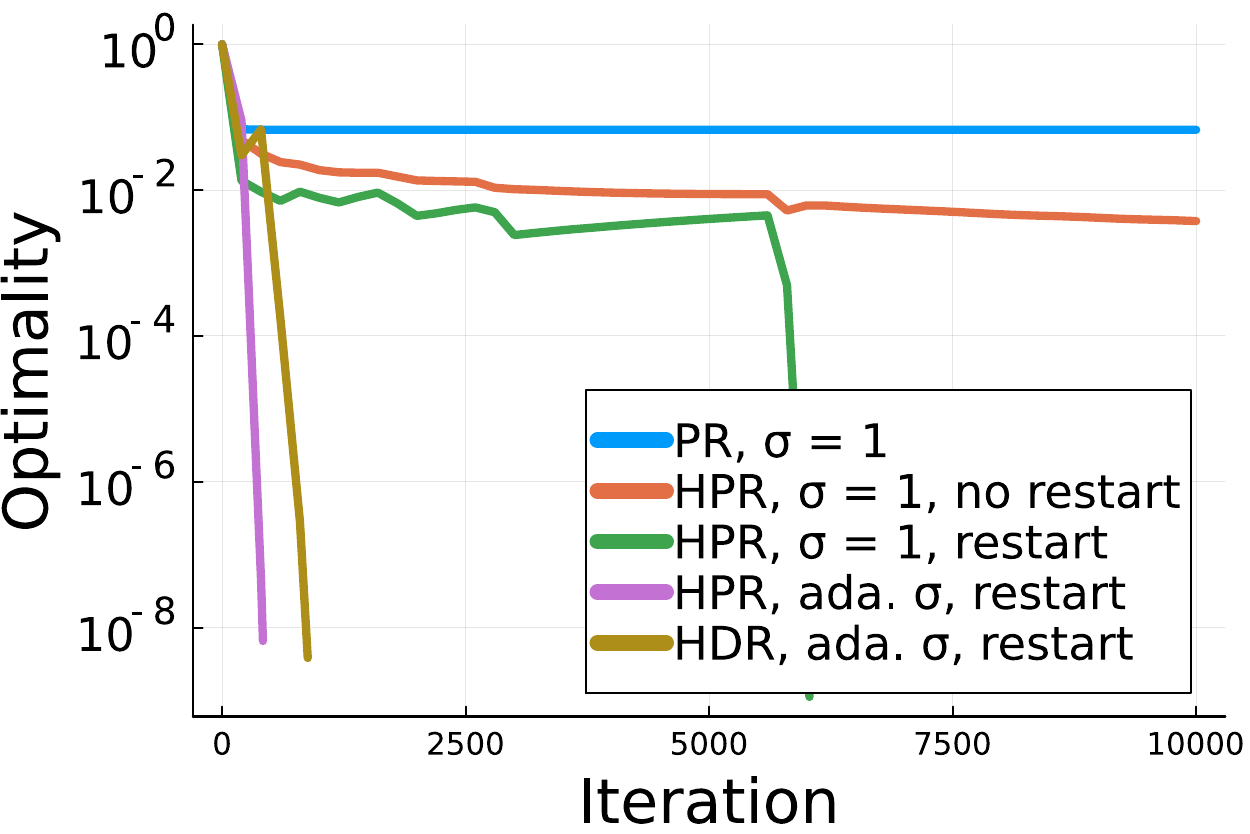}
    \caption{\texttt{nug08-3rd}}
\end{subfigure}
\caption{Performance of HPR methods with different enhancements.}
\label{fig:summary}
\end{figure}

\paragraph{Restart} We next compare the adaptive restart strategy with fixed-frequency restart. Fig.~\ref{fig:restart} reports the performance of the HPR method in Algorithm~\ref{alg:sp-HPR} (with $\sigma$ fixed at $1$) under three settings: no restart, fixed-frequency restart (with the frequency tuned for each instance), and adaptive restart as described in Subsection~\ref{subsec:algo-enhance}. When a restart strategy is applied, the HPR method exhibits linear convergence, with the rate depending on the restart schedule. Overall, adaptive restart consistently achieves slightly better performance than the best fixed-frequency choice.  

\begin{figure}[H]
\centering
\begin{subfigure}{0.48\textwidth}
    \includegraphics[width=\textwidth,height=4.5cm]{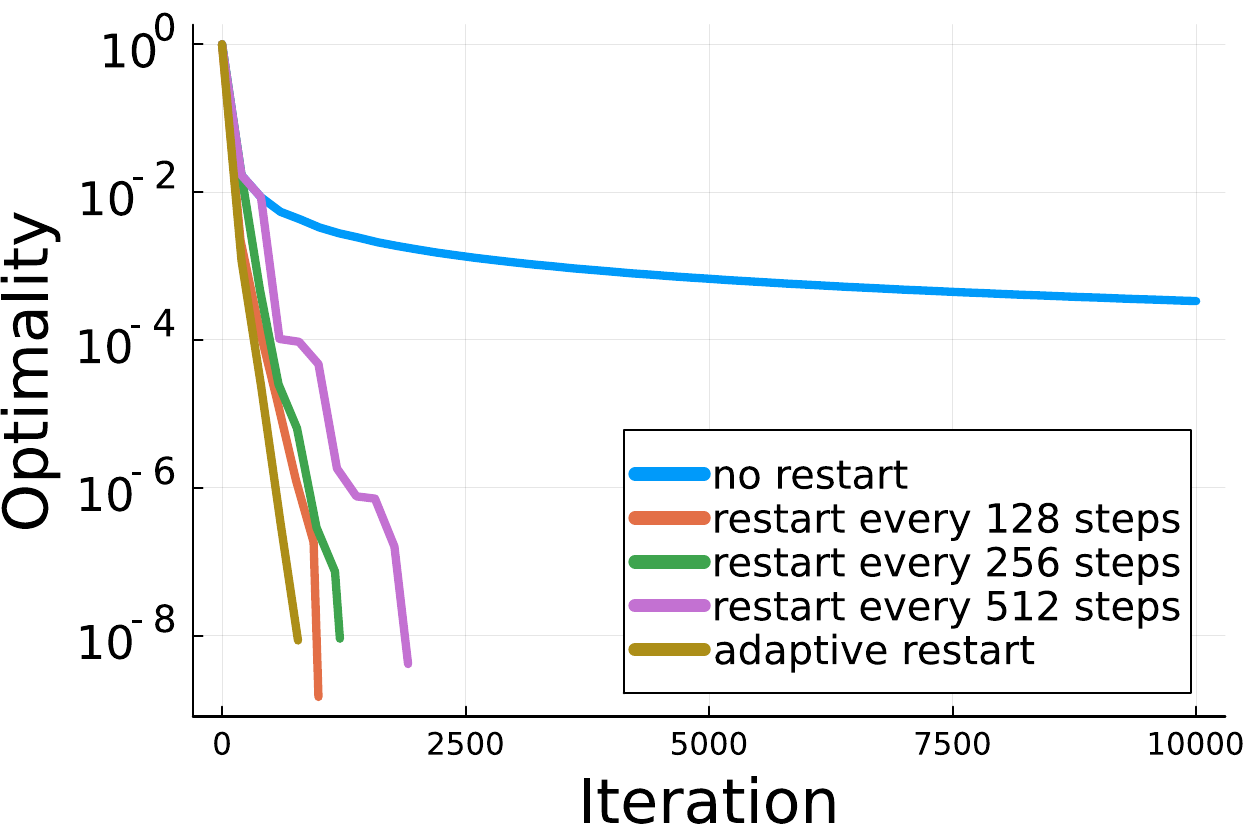}
    \caption{\texttt{datt256}}
\end{subfigure}\hfill
\begin{subfigure}{0.48\textwidth}
    \includegraphics[width=\textwidth,height=4.5cm]{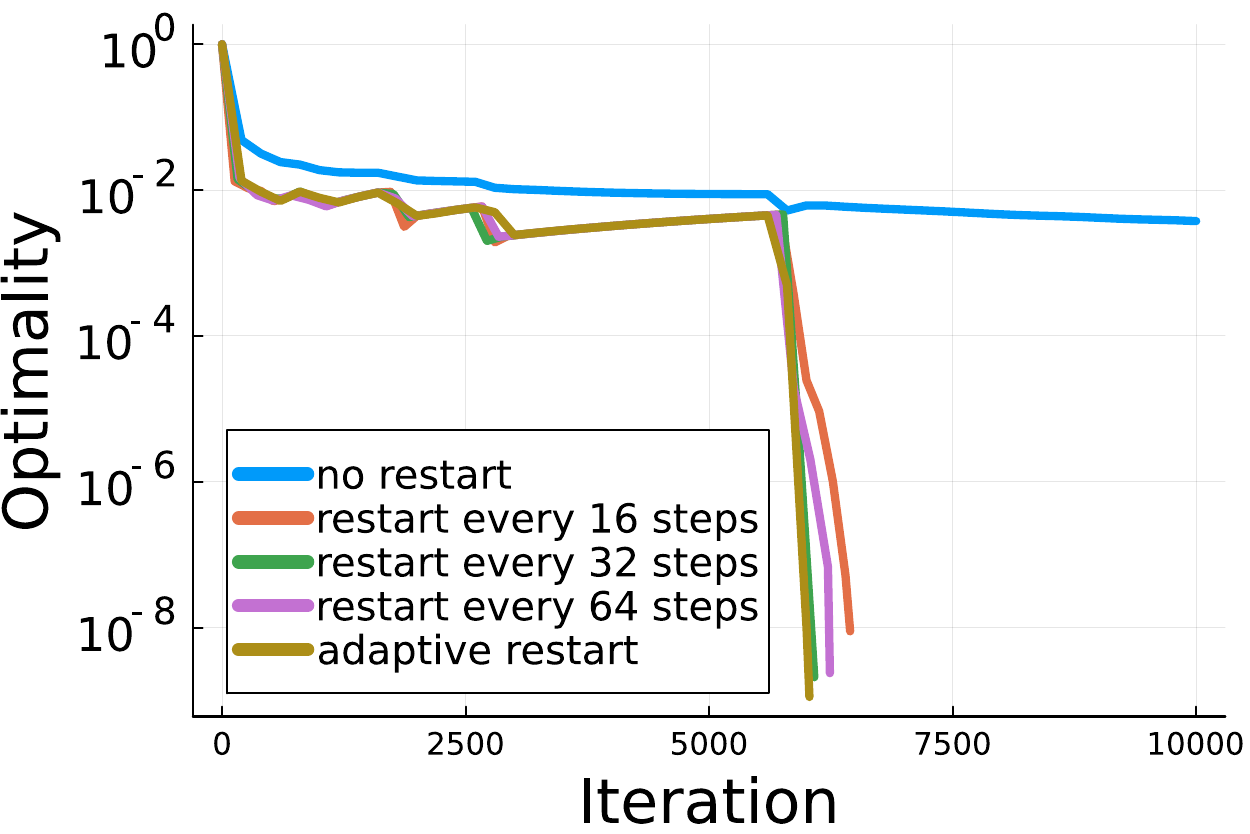}
    \caption{\texttt{nug08-3rd}}
\end{subfigure}
\caption{Performance of HPR methods under different restart strategies.}
\label{fig:restart}
\end{figure}

\paragraph{Penalty parameter $\sigma$} We next evaluate the effect of the penalty parameter in the HPR method with adaptive restart. Fig.~\ref{fig:sigma} reports the performance of HPR with different fixed values of $\sigma$ as well as with adaptive updates. We observe that for certain fixed choices of $\sigma$ (e.g., $\sigma=100$ for \texttt{datt256} and $\sigma=0.1$ or $1$ for \texttt{nug08-3rd}), the HPR method often exhibits a two-phase behavior: an initial slow progress phase (phase~I), followed by linear convergence (phase~II). The choice of $\sigma$ significantly affects the length of phase~I, while adaptive $\sigma$ selection consistently shortens this phase and yields the best overall performance.  

\begin{figure}[H]
\centering
\begin{subfigure}{0.48\textwidth}
    \includegraphics[width=\textwidth,height=4.5cm]{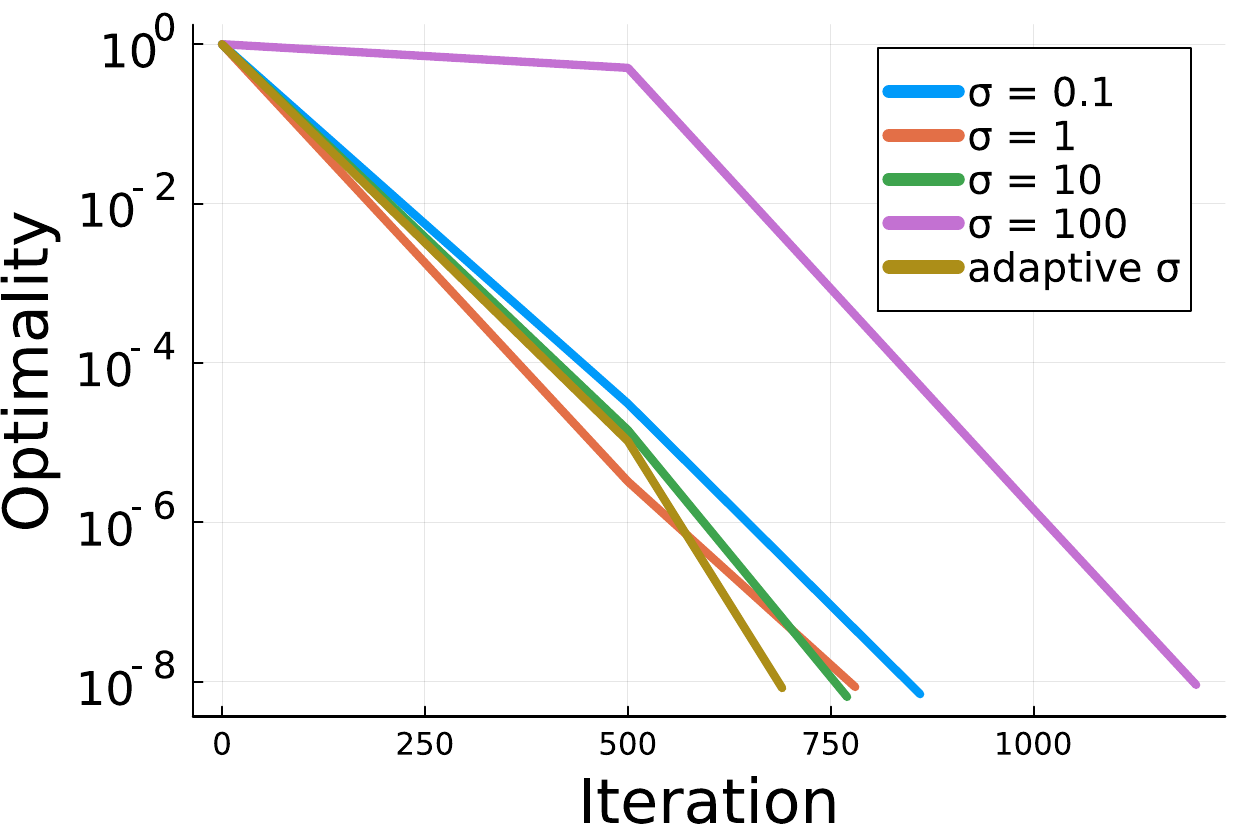}
    \caption{\texttt{datt256}}
\end{subfigure}\hfill
\begin{subfigure}{0.48\textwidth}
    \includegraphics[width=\textwidth,height=4.5cm]{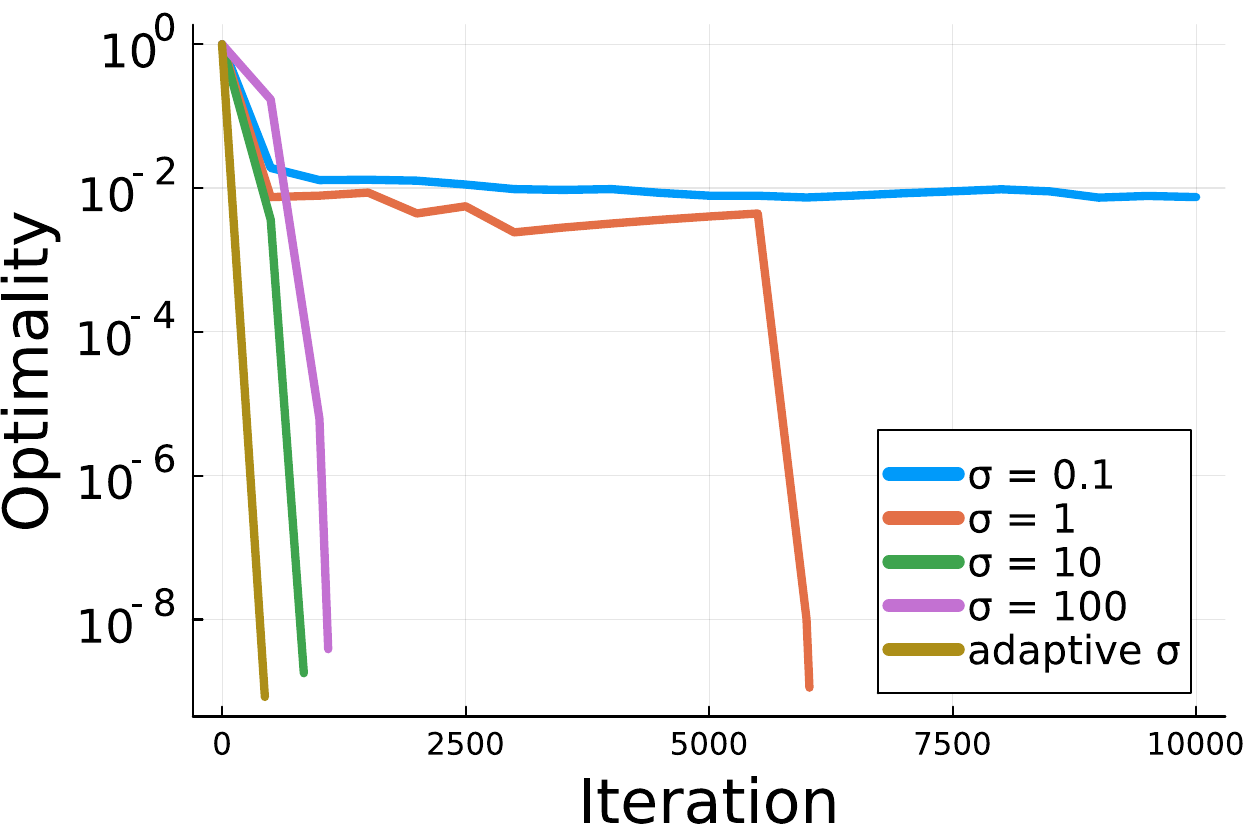}
    \caption{\texttt{nug08-3rd}}
\end{subfigure}
\caption{Performance of HPR methods under different penalty parameter strategies.}
\label{fig:sigma}
\end{figure}

\subsection{Performance comparison on benchmark datasets}
We compare the performance of several state-of-the-art solvers on two benchmark datasets, including 49 instances from the Mittelmann LP benchmark and a subset of 18 large-scale instances from MIPLIB 2017. 
The tested solvers include HPR-LP (Julia implementation)\footnote{\url{https://github.com/PolyU-IOR/HPR-LP}, v0.1.2} \cite{chen2024hpr}, EPR-LP (Julia implementation, incorporating restart and penalty parameter update techniques similar to HPR-LP) \cite{chen2025peaceman}, cuPDLPx (C implementation)\footnote{\url{https://github.com/MIT-Lu-Lab/cuPDLPx}, v0.1.0} \cite{lu2025cupdlpx},
and the GPU-accelerated PDLP family: 
cuPDLP.jl (Julia implementation)\footnote{ \url{https://github.com/jinwen-yang/cuPDLP.jl}, downloaded on July 24, 2024} \cite{lu2023cupdlp}, 
cuPDLP-C (C implementation)\footnote{\url{https://github.com/COPT-Public/cuPDLP-C}, v0.4.1}  \cite{lu2023cupdlp_c}, 
and NVIDIA cuOpt (C implementation)\footnote{\url{https://github.com/NVIDIA/cuopt}, v25.08.00}. For comparison with the commercial solver, we also include Gurobi\footnote{Version 12.0.2, academic license} \cite{gurobi}, run with its barrier method. To ensure fairness, the presolve option is disabled for all solvers.

We first compare two implementations of the HPR method for LP (Algorithm~\ref{alg:sp-HPR}): HPR-LP and cuPDLPx. The results on the Mittelmann LP benchmark and the MIPLIB 2017 relaxations are reported in Tables~\ref{tab:hans-benchmarks-HPR-LP-cuPDLPx} and~\ref{tab:MIP-HPR-LP-cuPDLPx}, respectively. On the Mittelmann benchmark, cuPDLPx performs comparably to HPR-LP, with both solving 44/49 instances, while HPR-LP is about 1.1$\times$ faster in terms of SGM10. On the MIPLIB relaxations, HPR-LP shows a clearer advantage, solving two additional instances and achieving a \textbf{1.8$\times$} speedup. These results confirm that both solvers are effective realizations of the HPR method, with our HPR-LP implementation exhibiting greater robustness and efficiency. Therefore, we do not include cuPDLPx in subsequent comparisons with other solvers.

\begin{table}[H]
\centering
\caption{Comparison between HPR-LP and cuPDLPx on the Mittelmann LP benchmark (49 instances). 
Accuracy $=10^{-8}$, time limit 15{,}000s.}
\label{tab:hans-benchmarks-HPR-LP-cuPDLPx}
\renewcommand{\arraystretch}{1.15}
\begin{tabularx}{\textwidth}{l *{2}{>{\centering\arraybackslash}X}}
\hline
\textbf{Solvers} & \textbf{HPR-LP} & \textbf{cuPDLPx} \\
\hline
SGM10 (s) & 82.1  & 90.7 \\
Solved    & 44    & 44   \\
\hline
\end{tabularx}
\end{table}
\begin{table}[H]
\centering
\caption{Comparison between HPR-LP and cuPDLPx on MIPLIB 2017 LP relaxations (18 instances). 
Accuracy $=10^{-8}$, time limit 18{,}000s.}
\label{tab:MIP-HPR-LP-cuPDLPx}
\renewcommand{\arraystretch}{1.15}
\begin{tabularx}{\textwidth}{l *{2}{>{\centering\arraybackslash}X}}
\hline
\textbf{Solvers} & \textbf{HPR-LP} & \textbf{cuPDLPx} \\
\hline
SGM10 (s) & 204.2 & 368.0 \\
Solved    & 17    & 15    \\
\hline
\end{tabularx}
\end{table}

Table~\ref{tab:hans-benchmarks-summary} summarizes the SGM10 runtimes and number of solved instances for HPR-LP and other tested solvers on the Mittelmann LP benchmark (49 instances). Detailed runtimes for each instance are provided in Table~\ref{tab:hans-benchmarks}. Specifically, HPR-LP delivers the best overall performance, attaining both the lowest SGM10 and the largest number of solved instances. EPR-LP is slower than HPR-LP but still outperforms all GPU-accelerated PDLP variants.  In terms of speedup, HPR-LP is approximately \textbf{3.4$\times$} faster than cuPDLP.jl, \textbf{2.1$\times$} faster than cuPDLP-C, \textbf{2.1$\times$} faster than NVIDIA cuOpt, and \textbf{1.4$\times$} faster than Gurobi.  Moreover, HPR-LP solves more instances than cuOpt and Gurobi, underscoring both its efficiency and robustness on large-scale LP problems.

\begin{table}[H]
\centering
\caption{Performance summary of solvers on the Mittelmann LP benchmark (49 instances). Accuracy $10^{-8}$, time limit 15,000s.}
\label{tab:hans-benchmarks-summary}
\renewcommand{\arraystretch}{1.15}
\setlength{\tabcolsep}{6pt}
\resizebox{\textwidth}{!}{%
\begin{tabular}{lcccccc}
\hline
\textbf{Solvers} & \textbf{HPR-LP} & \textbf{EPR-LP} & \textbf{cuPDLP.jl} & \textbf{cuPDLP-C} & \textbf{cuOpt} & \textbf{Gurobi} \\
\hline
SGM10 (s)   & 82.1 & 109.8 & 277.9 & 176.5 & 174.5 & 119.0 \\
Solved      & 44   & 43    & 40    & 40    & 40    & 41    \\
\hline
\end{tabular}%
}
\end{table}

Table~\ref{tab:mip-benchmarks-summary} reports the SGM10 runtimes and the number of solved instances on 18 large-scale MIP relaxations from MIPLIB 2017. Detailed runtimes for each instance are provided in Table~\ref{tab:mip-benchmarks}. Specifically, HPR-LP achieves the best overall performance, with the lowest SGM10 and the largest number of solved instances (17/18).  EPR-LP ranks second: it is slower than HPR-LP but still faster than all PDLP variants and Gurobi.  In terms of speedup, HPR-LP is approximately \textbf{2.1$\times$} faster than cuPDLP.jl, \textbf{1.6$\times$} faster than cuPDLP-C, \textbf{1.4$\times$} faster than NVIDIA cuOpt, and \textbf{1.9$\times$} faster than Gurobi.  
Moreover, HPR-LP solves three more instances than cuPDLP.jl, cuPDLP-C, and Gurobi (17 vs.\ 14), demonstrating its robustness on large-scale LP relaxations of MIPs.  

\begin{table}[H]
\centering
\caption{Performance summary of solvers on 18 MIP relaxations from MIPLIB 2017. 
Accuracy $10^{-8}$, time limit 18{,}000s.}
\label{tab:mip-benchmarks-summary}
\renewcommand{\arraystretch}{1.15}
\setlength{\tabcolsep}{6pt}
\resizebox{\textwidth}{!}{%
\begin{tabular}{lcccccc}
\hline
\textbf{Solvers} & \textbf{HPR-LP} & \textbf{EPR-LP} & \textbf{cuPDLP.jl} & \textbf{cuPDLP-C} & \textbf{cuOpt} & \textbf{Gurobi} \\
\hline
SGM10 (s) & 204.2 & 289.4 & 428.6 & 321.6 & 294.9 & 396.1 \\
Solved    & 17    & 16    & 14    & 14    & 16    & 14    \\
\hline
\end{tabular}%
}
\end{table}

Overall, the numerical experiments demonstrate that HPR-LP consistently achieves the best performance across different benchmark datasets. It not only attains the lowest SGM10 runtimes but also solves the largest number of instances, confirming both its efficiency and robustness.

\section{Conclusions and future directions}\label{sec:5}
In this paper, we studied the relationships among GPU-accelerated FOMs for LP, focusing on HPR-LP and its links to other recent solvers. Specifically, we showed that the base algorithm of cuPDLPx is a special case of the base algorithm of HPR-LP and proved that, once active sets are identified, HPR-LP and EPR-LP become equivalent through the correspondence between Halpern iteration and ergodic iteration. Numerical experiments on benchmark datasets confirmed the efficiency and robustness of HPR-LP. These results motivate the HPR framework as a solid foundation for advancing GPU-accelerated solvers for LP and related optimization problems. Several promising directions remain for developing the next generation of high-performance solvers based on HPR methods for LP and beyond:
\begin{enumerate}
    \item First, recent progress highlights the broader potential of HPR methods: Chen et al.~\cite{chen2025hpr} proposed a dual HPR method for CCQP, termed HPR-QP, which significantly outperforms existing solvers such as PDQP~\cite{lu2025practical} and SCS~\cite{o2016conic,o2021operator}. Building on the connections established in this paper, one may consider combining the implementation strategies of different solvers to produce more efficient realizations of the HPR method for solving large-scale LP problems and beyond. 
    \item Second, an important theoretical direction is to establish linear convergence guarantees for restarted variants of the HPR method, as empirically observed in Fig.~\ref{fig:restart}.  In \cite{lu2024restarted}, Lu and Yang proved that the restarted $r^2$HPDHG method with $\eta<1/\|A\|$ achieves a linear rate at the restart points:
\[
\operatorname{dist}_{\cP_\eta}\!\left(u^{r+1,0}, \mathcal{U}^*\right) 
\;\leq\; \left(\tfrac{1}{e}\right)^{r+1}\, 
\operatorname{dist}_{\cP_\eta}\!\left(u^{0,0}, \mathcal{U}^*\right) \quad \forall r\geq 0,
\]
where $\mathcal{U}^*$ denotes the optimal solution set, and 
\[
\cP_\eta \;=\;
\begin{pmatrix}
\tfrac{1}{\eta} I_m & A \\[4pt] 
A^{*} & \tfrac{1}{\eta} I_n
\end{pmatrix}
\]
is positive definite whenever $\eta<1/\|A\|$. At the critical parameter $\eta = 1/\|A\|$, however, $\cP_\eta$ becomes merely positive semidefinite, so $\operatorname{dist}_{\cP_\eta}$ is no longer a valid distance and the above linear rate guarantee breaks down. Given the equivalence between the base algorithm of HPR-LP and cuPDLPx ($r^2$HPDHG) in Proposition~\ref{prop:equiv-cuPDLPx-HPR}, it would be interesting to combine this equivalence with the theoretical framework of HPR developed in \cite{sun2025accelerating} to investigate whether linear convergence guarantees can be extended to the degenerate case $\eta = 1/\|A\|$.  

\item Third, on the practical side, integrating AI and machine learning into HPR methods—for instance, using reinforcement learning to design restart rules or adapt penalty parameters~\cite{ichnowski2021accelerating}—offers a promising avenue to further enhance their robustness and efficiency.
\end{enumerate}

% \section*{Acknowledgments}

\bibliographystyle{abbrv}

\bibliography{ref.bib}

\section*{Appendix}
In this appendix, we provide detailed solver results for the Mittelmann LP benchmark (49 instances) and for the MIP relaxations (18 instances) from MIPLIB 2017, reported in Table~\ref{tab:hans-benchmarks} and Table~\ref{tab:mip-benchmarks}, respectively.

\begin{table}[H]
\centering
\caption{Results on the Mittelmann LP benchmark (49 instances). Accuracy $10^{-8}$, time limit 15{,}000s. ``T'' indicates reaching the time limit, ``M'' indicates out-of-memory, and ``F'' indicates solver failure.}
\label{tab:hans-benchmarks}
\renewcommand{\arraystretch}{1.1}
\setlength{\tabcolsep}{6pt}
\resizebox{\textwidth}{!}{%
\begin{tabular}{lrrrrrrr}
\hline
\textbf{Instance} & \textbf{HPR-LP} & \textbf{EPR-LP} & \textbf{cuPDLP.jl} & \textbf{cuPDLP-C} & \textbf{cuOpt} & \textbf{Gurobi} \\
\hline
a2864               & 5.0     & 5.8     & 2.7       & 1.3      & 1.3     & F          \\
bdry2               & 3950.2  & 9367.0  & T   & T  & T & 10.7             \\
cont1               & 56.2    & 92.2    & 1871.5    & 389.2    & 361.0   & 1.0              \\
cont11              & 429.4   & 282.0   & 5373.9    & 1947.8   & 968.0   & 4.2              \\
datt256\_lp         & 0.4     & 0.4     & 2.1       & 0.2      & 0.2     & 1.1              \\
degme               & 40.2    & 45.9    & 85.8      & 60.1     & 64.7    & 17.4             \\
dlr1                & 12947.4 & T & T   & T  & T & 411.6            \\
dlr2                & T & T & T   & T  & T & 7103.3           \\
Dual2\_5000         & 72.8    & 54.7    & 9441.1    & 3310.1   & 747.6   & M          \\
ex10                & 0.5     & 0.5     & 1.5       & 0.1      & 0.2     & F          \\
fhnw-bin1           & 106.3   & 307.2   & 526.7     & 74.0     & 244.2   & 101.1            \\
FOME13              & 1.5     & 3.3     & 20.3      & 5.4      & 3.5     & 3.5              \\
graph40-40\_lp      & 0.4     & 0.3     & 1.5       & 0.1      & 0.2     & 24.9             \\
irish-electricity   & T & T & T   & T  & T & F          \\
L1\_sixm1000obs     & 13.9    & 20.4    & 38.6      & 37.5     & 74.4    & M          \\
L1\_sixm250obs      & 2.5     & 5.4     & 11.3      & 11.9     & 9.7     & 228.6            \\
L2CTA3D             & 43.3    & 49.5    & 16.3      & 9.3      & 12.0    & 542.1            \\
Linf\_five20c       & 3.7     & 30.8    & T   & T  & T & 10.7             \\
neos                & 27.9    & 83.9    & 396.0     & 199.4    & 162.5   & 212.0            \\
neos-3025225\_lp    & 5.4     & 11.5    & 9.7       & 3.9      & 3.3     & 24.1             \\
neos-5052403-cygnet & 10.6    & 10.2    & 16.3      & 8.6      & 6.7     & 11.3             \\
neos-5251015\_lp    & 1.1     & 3.8     & 3.4       & 1.2      & 1.2     & 5.7              \\
neos3               & 0.4     & 0.3     & 2.0       & 0.6      & 0.4     & 321.3            \\
ns168703            & T & T & T   & T  & T & 4.4              \\
ns168892            & T & T & T   & T  & T & F          \\
nug08-3rd           & 0.1     & 0.2     & 1.7       & 0.1      & 0.1     & 4.8              \\
pds-100             & 3.0     & 4.5     & 21.2      & 8.7      & 7.8     & 19.5             \\
physiciansched3-3   & T & T & T   & T  & T & 89.3             \\
Primal2\_1000       & 214.2   & 402.9   & 2105.8    & 1556.6   & 1069.0  & F          \\
qap15               & 1.8     & 2.7     & 14.3      & 5.4      & 2.9     & 1.2              \\
rail02              & 47.6    & 75.6    & 852.4     & 145.6    & 168.0   & 9.8              \\
rail4284            & 137.3   & 397.6   & 578.7     & 362.4    & 482.1   & 13.2             \\
rmine15\_lp         & 3.5     & 6.3     & 24.3      & 10.0     & 9.8     & 156.3            \\
s100                & 148.5   & 319.8   & 1631.4    & 571.4    & 499.4   & 6.7              \\
s250r10             & 18.3    & 37.1    & 192.6     & 105.4    & 35.6    & 3.4              \\
s82                 & 1157.1  & 3803.6  & 3390.7    & 4686.7   & 6204.9  & 75.0             \\
savsched1           & 0.2     & 0.3     & 1.9       & 0.3      & 0.2     & 6.1              \\
scpm1\_lp           & 31.7    & 53.4    & 41.9      & 27.5     & 19.6    & 9.2              \\
set-cover           & 287.5   & 383.7   & 515.3     & 290.1    & 385.5   & 60.5             \\
shs1023             & 110.5   & 144.5   & T   & T  & T & 32.8             \\
square41            & 384.3   & 326.6   & 234.9     & 138.3    & 89.3    & 79.5             \\
stat96v2            & 459.9   & 1234.5  & 1705.9    & 1290.1   & 9740.8  & F          \\
stormG2\_1000       & 10.6    & 11.9    & 41.0      & 30.5     & 30.5    & 23.1             \\
stp3d               & 15.6    & 24.9    & 157.0     & 34.4     & 27.4    & 5.3              \\
supportcase10       & 5.3     & 6.1     & 20.9      & 6.9      & 5.2     & 19.5             \\
thk\_48             & 572.2   & 489.3   & 2839.6    & 1164.5   & 755.9   & 7581.2           \\
thk\_63             & 28.0    & 47.8    & 101.4     & 49.2     & 54.3    & 270.5            \\
tpl-tub-ws1617      & 38.6    & 66.2    & 554.3     & 151.1    & 220.2   & 51.7             \\
woodlands09         & 0.7     & 0.6     & 2.3       & 0.7      & 0.5     & 7.4              \\
\hline
SGM10    & 82.1 & 109.8 & 277.9 & 176.5 & 174.5 & 119.0 \\
solved  & 44 & 43 & 40 & 40 & 40 & 41 \\
\hline
\end{tabular}%
}
\end{table}

\begin{table}[H]
\centering
\caption{Results on MIP relaxations (18 instances) from MIPLIB 2017. 
Accuracy $10^{-8}$, time limit 18{,}000s. 
``T'' indicates reaching the time limit, ``M'' indicates out-of-memory, and ``F'' indicates solver failure.}\label{tab:mip-benchmarks}
\renewcommand{\arraystretch}{1.15}
\setlength{\tabcolsep}{6pt}
\resizebox{\textwidth}{!}{%
\begin{tabular}{lrrrrrrr}
\hline
\textbf{Instance} & \textbf{HPR-LP} & \textbf{EPR-LP} & \textbf{cuPDLP.jl} & \textbf{cuPDLP-C} & \textbf{cuOpt} & \textbf{Gurobi} \\
\hline
a2864-99blp        & 4.9     & 5.8     & 8.1       & 1.3      & 1.3     & F          \\
dlr1               & 12934.8 & T & T   & T  & T & 412.6            \\
ivu06-big          & 624.5   & 905.3   & 4203.3    & 763.7    & 1347.3  & 24.8             \\
ivu59              & 929.3   & 1061.7  & 3500.5    & 368.3    & 401.7   & 30.9             \\
kottenpark09       & 80.4    & 52.0    & 60.9      & 74.6     & 59.5    & 278.2            \\
neos-2991472-kalu  & 0.4     & 0.9     & 0.5       & 0.1      & 0.1     & 91.1             \\
neos-3208254-reiu  & 35.3    & 72.3    & 43.0      & 116.0    & 170.0   & 640.6            \\
neos-3740487-motru & 1.5     & 2.4     & 1.6       & 3.4      & 3.1     & M          \\
neos-4332801-seret & 301.0   & 1229.0  & T   & T  & 9260.0  & 419.2            \\
neos-4332810-sesia & 272.8   & 785.0   & T   & T  & 2575.2  & F         \\
neos-4535459-waipa & T & T & T   & T  & T & F         \\
neos-4545615-waita & 297.6   & 394.1   & 393.4     & 2915.2   & 3069.1  & 39.3             \\
nucorsav           & 3.2     & 5.6     & 3.5       & 4.9      & 3.7     & 1862.0           \\
scpn2              & 173.5   & 121.5   & 321.8     & 44.6     & 37.8    & 17.5             \\
square41           & 383.0   & 327.7   & 328.0     & 138.0    & 89.6    & 79.0             \\
square47           & 548.1   & 697.7   & 571.8     & 133.8    & 153.8   & 57.3             \\
t11nonreg          & 1498.1  & 1683.9  & 1821.7    & 2136.0   & 2514.5  & 104.2            \\
usafa              & 30.6    & 318.0   & 33.3      & 33.2     & 66.5    & 90.6             \\
\hline
SGM10                     & 204.2   & 289.4   & 428.6     & 321.6    & 294.9   & 396.1            \\
solved                    & 17      & 16      & 14        & 14       & 16      & 14       \\
\hline
\end{tabular}%
}
\end{table}

\end{document}